\DeclareSymbolFont{largesymbol}{OMX}{yhex}{m}{n}
\DeclareMathAccent{\Widehat}{\mathord}{largesymbol}{"62}
\definecolor{verylight}{gray}{0.97}
\definecolor{light}{gray}{0.9}
\definecolor{medium}{gray}{0.85}
\definecolor{dark}{gray}{0.6}
 \def\G{{\mathcal G}}
 \def\opn#1#2{\def#1{\operatorname{#2}}} % to make operators
 \opn\chara{char} \opn\length{\ell} \opn\pd{pd} \opn\rk{rk}
 \opn\projdim{proj\,dim} \opn\injdim{inj\,dim} \opn\rank{rank}
 \opn\depth{depth} \opn\grade{grade} \opn\height{height}
 \opn\embdim{emb\,dim} \opn\codim{codim}
 \opn\Tr{Tr} \opn\bigrank{big\,rank}
 \opn\superheight{superheight}\opn\lcm{lcm}
 \opn\trdeg{tr\,deg}%\emph{
 \opn\reg{reg} \opn\lreg{lreg} \opn\ini{in} \opn\lpd{lpd}
 \opn\size{size} \opn\sdepth{sdepth}
 \opn\link{link}\opn\fdepth{fdepth}\opn\lex{lex}
 \opn\tr{tr}
 \opn\type{type}
 \opn\Borel{Borel}
\opn\cdeg{cdeg}
 \opn\div{div} \opn\Div{Div} \opn\cl{cl} \opn\Cl{Cl}
 \opn\Spec{Spec} \opn\Supp{Supp} \opn\supp{supp} \opn\Sing{Sing}
 \opn\Ass{Ass} \opn\Min{Min}\opn\Mon{Mon}
 \opn\Ann{Ann} \opn\Rad{Rad} \opn\Soc{Soc}
 \opn\Im{Im} \opn\Ker{Ker} \opn\Coker{Coker} \opn\Am{Am}
 \opn\Hom{Hom} \opn\Tor{Tor} \opn\Ext{Ext} \opn\End{End}
 \opn\Aut{Aut} \opn\id{id}
 \opn\nat{nat}
 \opn\pff{pf}%   \pf exists already
 \opn\Pf{Pf} \opn\GL{GL} \opn\SL{SL} \opn\mod{mod} \opn\ord{ord}
 \opn\Gin{Gin} \opn\Hilb{Hilb}\opn\sort{sort}
 \opn\PF{PF}\opn\Ap{Ap}
 \opn\aff{aff} \opn
\opn\relint{relint} \opn\st{st}
 \opn\lk{lk} \opn\cn{cn} \opn\core{core} \opn\vol{vol}  \opn\inp{inp} \opn\nilpot{nilpot}
 \opn\link{link} \opn\star{star}\opn\lex{lex}\opn\set{set}
 \opn\width{wd}
 \opn\Fr{F}
 \opn\QF{QF}
 \opn\G{G}
 \opn\type{type}\opn\res{res}
 \opn\gr{gr}
  \def\cdeg{deg}
 \def\pot#1#2{#1[\kern-0.28ex[#2]\kern-0.28ex]}
 \opn\dirlim{\underrightarrow{\lim}}
 \opn\inivlim{\underleftarrow{\lim}}
 \def\Implies{\ifmmode\Longrightarrow \else
         \unskip${}\Longrightarrow{}$\ignorespaces\fi}
 \def\implies{\ifmmode\Rightarrow \else
         \unskip${}\Rightarrow{}$\ignorespaces\fi}
 \def\iff{\ifmmode\Longleftrightarrow \else
         \unskip${}\Longleftrightarrow{}$\ignorespaces\fi}
 \newtheorem{Theorem}{Theorem}[section]
 \newtheorem{Lemma}[Theorem]{Lemma}
 \newtheorem{Corollary}[Theorem]{Corollary}
 \newtheorem{Example}[Theorem]{Example}
 \newtheorem{Definition}[Theorem]{Definition}
 \let\epsilon\varepsilon
 \let\kappa=\varkappa
 \def\qed{\ifhmode\textqed\fi
       \ifmmode\ifinner\quad\qedsymbol\else\dispqed\fi\fi}
 \def\textqed{\unskip\nobreak\penalty50
        \hskip2em\hbox{}\nobreak\hfil\qedsymbol
        \parfillskip=0pt \finalhyphendemerits=0}
 \def\dispqed{\rlap{\qquad\qedsymbol}}
 \opn\dis{dis}
 \def\pnt{{\raise0.5mm\hbox{\large\bf.}}}
 \opn\Lex{Lex}
\begin{document}
%\linenumbers
\title {Projective dimension and  regularity of  powers of edge ideals of vertex-weighted  rooted forests}

\author { Li Xu,  Guangjun Zhu$^{^*}$,  Hong Wang and  Jiaqi Zhang }

\address{Authors¡¯ address:  School of Mathematical Sciences, Soochow
University, Suzhou 215006, P.R. China}
\email{zhuguangjun@suda.edu.cn(Corresponding author:Guangjun Zhu),
\linebreak[4] 1240470845@qq.com(Li Xu), 651634806@qq.com(Hong Wang),\nolinebreak[2] zjq7758258@vip.qq.com(Jiaqi Zhang).}

\dedicatory{ }

\begin{abstract}
In this paper we provide some exact formulas for  projective dimension  and the regularity of powers
of edge ideals of  vertex-weighted   rooted forests. These formulas are  functions of the weight of the vertices and the number of edges.
 We  also give some examples to show that these  formulas are related to direction selection and
the assumptions about ``rooted"  forest such that
$w(x)\geq 2$  if $d(x)\neq 1$ cannot be dropped.
\end{abstract}

\thanks{* Corresponding author}

\subjclass[2010]{ Primary: 13C10; 13D02; Secondary  05E40, 05C20, 05C22.}
%		13H10   	Special types (Cohen-Macaulay, Gorenstein, Buchsbaum, etc.)
%		13D02   	Syzygies, resolutions, complexes
%		05E40   	Combinatorial aspects of commutative algebra
%		16S36   	Ordinary and skew polynomial rings and semigroup rings

%		14M25   	Toric varieties, Newton polyhedra [See also 52B20]
%		13A02   	Graded rings
%		13F20   	Polynomial rings and ideals; rings of integer-valued polynomials
%		13A18   	Valuations and their generalizations
%		06A11   	Algebraic aspects of posets

\keywords{projective dimension, regularity, edge ideal, powers of the edge ideal, vertex-weighted rooted forest}

\maketitle

\setcounter{tocdepth}{1}
%\tableofcontents

\section{Introduction}

\hspace{3mm} Let $S=k[x_{1},\dots,x_{n}]$ be a polynomial ring in $n$ variables over a field $k$ and let $I\subset S$ be a homogeneous ideal.
There are two central invariants associated to $I$, the regularity $\mbox{reg}\,(I):=\mbox{max}\{j-i\ |\ \beta_{i,j}(I)\neq 0\}$
 and the projective dimension $\mbox{pd}\,(I):=\mbox{max}\{i\ |\ \beta_{i,j}(I)\neq 0\ \text{for some}\ j\}$, that in a sense, they measure the complexity of computing the graded Betti numbers $\beta_{i,j}(I)$ of $I$.  In particular, if $I$ is a
monomial ideal,  its polarization  $I^{\mathcal{P}}$ has the same projective dimension and regularity as $I$ and is squarefree. Thus one can associate $I^{\mathcal{P}}$ to a graph or a hypergraph or  a simplicial complex.
  Many authors have studied the regularity
and Betti numbers of edge ideals of graphs, e.g. \cite{AF1,AF2,BHO,EK,HT2,HT3,J,KM,MV,W,Z1,Z2,Z3}. Other authors
have studied higher degree generalizations using hypergraphs and clutters \cite{DHS,DS1,DS2,DS3,LM}
or simplicial complexes \cite{EF,F1}.

Given  any homogeneous ideal $I$, it is well known that the regularity of $I^t$ is asymptotically a linear function in $t$, that is, there exist constants $a$ and $b$ such that for all $t\gg 0$, $\mbox{reg}\,(I^t)=at+b$ (see \cite{Ch,CHT,K,TW}). Generally, the problem of finding the exact linear form $at+b$ and the
smallest value $t_0$ such that $\mbox{reg}\,(I^t)=at+b$ for all $t\geq t_ 0$ has proved to be very difficult. In \cite{B2}, Brodmann  showed
that  $\mbox{depth}\,(S/I^t)$ is a constant for $t\gg 0$, and this constant is bounded
above by $n-\ell(I)$, where $\ell(I)$ is the analytic
spread of $I$.  It is shown  in \cite[Theorem 1.2]{HH3} that $\mbox{depth}\,(S/I^t)$ is a nonincreasing
function of $t$ when all powers of $I$ have a linear resolution
and conditions are given in that paper under which all powers of $I$ will
have linear quotients.
By Auslander-Buchsbaum formula, we obtain the projective dimension $\mbox{pd}\,(S/I^t)$ is a constant for $t\gg 0$.
In this regard, there has been an interest in determining the smallest value $t_0$ such that $\mbox{pd}\,(S/I^t)$ is a constant for all $t\geq t_ 0$.
 (see  \cite{E,EF,FM,HH3,JNS,M}).

To the best of our knowledge, a few papers consider how to compute  $\mbox{pd}\,(S/I^t)$ for a homogeneous ideal $I$.

A {\em directed graph} or {\em digraph} $D$ consists of a finite set $V(D)$ of vertices, together
with a collection $E(D)$ of ordered pairs of distinct points called edges or
arrows. A vertex-weighted directed graph is a triplet $D=(V(D), E(D),w)$, where  $w$ is a weight function $w: V(D)\rightarrow \mathbb{N}^{+}$, where $N^{+}=\{1,2,\ldots\}$.
Some times for short we denote the vertex set $V(D)$ and edge set $E(D)$
by $V$ and $E$ respectively.
The weight of $x_i\in V$ is $w(x_i)$, denoted by $w_i$ or $w_{x_i}$.

The edge ideal of a vertex-weighted digraph was first introduced by Gimenez et al \cite{GBSVV}. Let $D=(V,E,w)$ be a  vertex-weighted digraph with the vertex set $V=\{x_{1},\ldots,x_{n}\}$. We consider the polynomial ring $S=k[x_{1},\dots, x_{n}]$ in $n$ variables over a field $k$. The edge ideal of $D$,  denoted by $I(D)$, is the ideal of $S$ given by
$$I(D)=(x_ix_j^{w_j}\mid  x_ix_j\in E).$$

Edge ideals of weighted digraphs arose in the theory of Reed-Muller codes as initial ideals of vanishing ideals
of projective spaces over finite fields \cite{MPV,PS}.
If a vertex $x_i$ of $D$ is a source (i.e., has only arrows leaving $x_i$) we shall always
assume $w_i=1$ because in this case the definition of $I(D)$ does not depend on the
weight of $x_i$.  If  $w_j=1$ for all $j$, then $I(D)$ is the edge ideal of underlying graph $G(D)$ of $D$.
Many researchers have tried to compute  $a,b$ and $t_ 0$  such that $\mbox{reg}\,(I(G(D))^t)=at+b$ for all $t\geq t_ 0$
for the edge ideals of some special families of graphs (see \cite{AB,ABS,B1,BBH1,BBH2,BHT,MSY,RJNP}).
Some authors have studied  bounds for $\text{depth}\,(S/I(G(D))^t)$ or $\text{pd}\,(I(G(D))^t)$
for    edge ideals of some special  graphs (see \cite{HT3,HH2,FM,M}). In \cite{Z4}, the first three authors  derive some exact formulas for the projective
dimension and regularity of the edge ideals of vertex-weighted rooted forests and oriented cycles.
To the best of our knowledge, there is no result about the projective
dimension  and the regularity of $I(D)^t$ for a  vertex-weighted digraph.

In this article, we are interested in algebraic properties corresponding to the projective
dimension and the regularity of $I(D)^t$ for some  vertex-weighted oriented graphs. By using the approaches of Betti splitting and polarization, we derive some exact formulas  for the projective
dimension  and the regularity  of powers of edge ideals of some directed graphs.
The results are as follows:

\begin{Theorem}\label{thm5}
Let $D=(V(D),E(D),w)$ be a vertex-weighted rooted forest such that  $w(x)\geq 2$  if $d(x)\neq 1$. Then
$$\mbox{reg}\,(I(D)^{t})=\sum\limits_{x\in V(D)}w(x)-|E(D)|+1+(t-1)(w+1)\ \ \  \mbox{for all}\ \ t\geq 1,$$
where $w=\mbox{max}\,\{w(x)\mid x \in V(D)\}$.
\end{Theorem}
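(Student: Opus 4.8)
The plan is to prove the formula by a double induction, with the outer variable the exponent $t$ and the inner variable the number of edges $|E(D)|$. The base case $t=1$ is precisely the regularity formula $\reg(I(D))=\sum_{x\in V(D)}w(x)-|E(D)|+1$ for vertex-weighted rooted forests proved in \cite{Z4}, and the inner base case $|E(D)|=1$ is immediate: there $I(D)=(u)$ is principal of degree $1+w_p$, so $\reg(I(D)^t)=t(1+w_p)$, which matches the claimed value since then $w=w_p$. For the inductive step, fix $t\ge 2$ and choose a childless leaf $x_c$ of the forest; since such a leaf is a source it has weight $1$, and its unique incident edge contributes a single generator $u=x_c x_p^{w_p}$ of $I(D)$, where $x_p$ is the neighbor of $x_c$ and $\deg u=1+w_p\le 1+w$. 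Put $D'=D\setminus\{x_c\}$ and $I'=I(D')$. Because $x_c$ has weight $1$ and we also delete one edge, one checks that $\sum_{x\in V(D')}w(x)-|E(D')|+1=\sum_{x\in V(D)}w(x)-|E(D)|+1$ and that $\max_x w(x)$ is unchanged, and that $D'$ still lies in our class; hence the inductive hypothesis applies to $D'$.

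The decomposition we use is
\[
I(D)^t=(I')^t+u\,I(D)^{t-1},
\]
valid because a product of $t$ generators of $I(D)$ either avoids $u$, and so lies in $(I')^t$, or uses $u$ at least once, and so lies in $uI(D)^{t-1}$. Since $x_c$ is a leaf, the variable $x_c$ divides every generator of $uI(D)^{t-1}$ and no generator of $(I')^t$; thus this is an $x_c$-partition of the minimal generators of $I(D)^t$ and is therefore a Betti splitting. The associated regularity identity reads
\[
\reg(I(D)^t)=\max\{\reg((I')^t),\ \reg(uI(D)^{t-1}),\ \reg((I')^t\cap uI(D)^{t-1})-1\}.
\]
By the inner induction the first term equals $\sum_{x\in V(D)}w(x)-|E(D)|+1+(t-1)(w+1)$, the exact target. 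Since $uI(D)^{t-1}\cong I(D)^{t-1}(-\deg u)$, the outer induction gives $\reg(uI(D)^{t-1})=(1+w_p)+\reg(I(D)^{t-1})$, which equals the target minus $(w-w_p)\ge 0$, hence does not exceed it. It therefore remains only to show that the third term is at most the target.

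The main obstacle is thus the intersection term $\reg((I')^t\cap uI(D)^{t-1})$. Using the monomial identity $uB\cap A=u\bigl(B\cap(A:u)\bigr)$ together with $(I')^t:x_c=(I')^t$ (as $x_c$ does not occur in $I'$), the intersection simplifies to
\[
(I')^t\cap uI(D)^{t-1}=u\cdot\bigl(I(D)^{t-1}\cap((I')^t:x_p^{w_p})\bigr),
\]
so that its regularity equals $(1+w_p)+\reg\bigl(I(D)^{t-1}\cap((I')^t:x_p^{w_p})\bigr)$. I expect the heart of the argument to be the analysis of the colon ideal $(I')^t:x_p^{w_p}$, whose generators are controlled by the other children of $x_p$ and by the outgoing edge of $x_p$, and of its intersection with $I(D)^{t-1}$. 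The goal is the bound
\[
\reg\bigl(I(D)^{t-1}\cap((I')^t:x_p^{w_p})\bigr)\le \sum_{x\in V(D)}w(x)-|E(D)|+1+(t-1)(w+1)-w_p,
\]
which after adding $1+w_p$ and subtracting $1$ keeps the third term at or below the target. The plan for this bound is to describe the minimal monomial generators of the intersection explicitly and compute its regularity by polarization, reducing to a squarefree monomial ideal whose regularity is read off combinatorially and compared with the inductive values for $I(D)^{t-1}$ and for the forests obtained by deleting edges incident to $x_p$. Once this estimate is established, the Betti-splitting identity forces $\reg(I(D)^t)=\sum_{x\in V(D)}w(x)-|E(D)|+1+(t-1)(w+1)$, completing the induction.
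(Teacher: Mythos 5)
There are genuine gaps here, and the most serious one is structural: you have the orientation of a rooted forest backwards. In this paper all edges of a rooted forest are oriented \emph{away} from the root, so a childless leaf $x_c$ is the \emph{head} of its unique edge: $N_D^{-}(x_c)=\{x_p\}$ and the corresponding generator is $u=x_px_c^{w_{x_c}}$, not $x_cx_p^{w_p}$. Such a leaf is a sink, not a source, so nothing forces $w_{x_c}=1$; leaves are exactly the vertices left unconstrained by the hypothesis, since $d(x_c)=1$. This breaks the bookkeeping your double induction rests on: deleting the leaf changes $\sum_{x\in V(D)}w(x)-|E(D)|+1$ by $1-w_{x_c}$, which is nonzero in general, and it can also lower $w=\max\{w(x)\mid x\in V(D)\}$ when the leaf carries the unique maximal weight. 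Consequently your claim that $\reg\,((I')^t)$ already equals the target fails, and the alignment of all three terms of your splitting with the target collapses. Nor can you quietly switch to the toward-the-root convention: the paper's ``direction selection'' examples show the formula is orientation-sensitive, so that would be a different theorem. The paper's own proof is shaped by exactly this difficulty: it chooses a leaf $z$ of \emph{minimal} weight among two leaves (so that $w$ survives deletion), never expects $\reg\,(I(D\setminus z)^t)$ to equal the target, and compensates with a second exact sequence.

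Beyond this there are two more gaps. Your step ``this is an $x_c$-partition of the minimal generators and is therefore a Betti splitting'' is not a theorem: the only criterion available (Lemma \ref{lem1}, i.e.\ \cite[Corollary 2.7]{FHT}) requires the part containing all generators divisible by $x_c$ --- here $uI(D)^{t-1}$ --- to have a linear resolution, which fails because its generators do not even share a common degree. And the heart of your argument, the bound on $\reg\,\bigl(I(D)^{t-1}\cap((I')^t:x_p^{w_p})\bigr)$, is only announced as a ``plan,'' so the proposal is incomplete even on its own terms; that estimate is precisely where the difficulty of the theorem lives. For comparison, the paper avoids Betti splittings of powers altogether: it uses the two short exact sequences obtained by coloning out $z^{w_z}$ and then $y$, identifies the outer terms via the combinatorial identities $(I(D)^{t},z^{w_z})=(I(D\setminus z)^{t},z^{w_z})$, $(I(D)^{t}:yz^{w_z})=I(D)^{t-1}$ and $((I(D)^{t}:z^{w_z}),y)=(I(D\setminus y)^{t},y)$ (Lemmas \ref{lem8}--\ref{lem10}), and then runs induction on $|V(D)|$ and $t$ through the regularity-transfer rules of Lemma \ref{lem5}, with oriented line graphs (Theorem \ref{thm3}), stars (Theorem \ref{thm4}), and single-edge components handled separately.
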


\begin{Theorem}\label{thm6}
Let $D=(V(D),E(D),w)$ be a vertex-weighted rooted forest such that    $w(x)\geq 2$  if $d(x)\neq 1$. Then
$$\mbox{pd}\,(I(D)^{t})=|E(D)|-1 \ \  \mbox{for all}\ \ t\geq 1.$$
\end{Theorem}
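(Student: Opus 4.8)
The plan is to argue by a double induction: an outer induction on the number of edges $r=|E(D)|$, and, for each fixed forest, an inner induction on the power $t$. When $r=1$ the ideal $I(D)^{t}$ is principal for every $t$, so $\mbox{pd}\,(I(D)^{t})=0=r-1$; when $t=1$ the equality $\mbox{pd}\,(I(D))=r-1$ is the content of the corresponding result in \cite{Z4}. For the inductive step I would single out a leaf $v$ of $D$ lying farthest from its root (so that $v$ is a sink and the generator attached to its edge has the form $g=u\,v^{w(v)}$, where $u$ is the unique neighbour of $v$). Removing $v$ yields a rooted forest $D'=D-v$ with $r-1$ edges that still satisfies the hypothesis $w(x)\ge 2$ whenever $d(x)\ne 1$, since deleting a leaf only lowers degrees and can therefore only relax the weight constraint. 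Writing $I=I(D)$ and $I'=I(D')$, the basic observation is the monomial identity $I^{t}=g\,I^{t-1}+I'^{\,t}$, obtained by expanding $((g)+I')^{t}=\sum_{i=0}^{t} g^{i}I'^{\,t-i}$, splitting off the $i=0$ term $I'^{\,t}$, and recognising $g\sum_{i=0}^{t-1} g^{i}I'^{\,t-1-i}=g\,I^{t-1}$.

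Since the variable $v$ divides every generator of $g\,I^{t-1}$ and occurs in no generator of $I'$, the decomposition $I^{t}=g\,I^{t-1}+I'^{\,t}$ is separated by $v$ and is a Betti splitting, so the standard formula gives
$$\mbox{pd}\,(I^{t})=\max\{\mbox{pd}\,(g\,I^{t-1}),\ \mbox{pd}\,(I'^{\,t}),\ \mbox{pd}\,(g\,I^{t-1}\cap I'^{\,t})+1\}.$$
The first term is supplied by the inner induction: multiplication by the monomial $g$ is a degree-shifting isomorphism, whence $\mbox{pd}\,(g\,I^{t-1})=\mbox{pd}\,(I^{t-1})=r-1$. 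The second term is supplied by the outer induction applied to $D'$, giving $\mbox{pd}\,(I'^{\,t})=(r-1)-1=r-2$. Because the $(r-1)$-term already appears, the splitting forces $\mbox{pd}\,(I^{t})\ge r-1$, so it remains only to prove the upper bound $\mbox{pd}\,(g\,I^{t-1}\cap I'^{\,t})\le r-2$; granting this, the maximum is exactly $r-1$ and the claimed formula follows.

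The main obstacle is precisely this bound on the intersection. Using the colon identity $g\,I^{t-1}\cap I'^{\,t}=g\,\bigl(I^{t-1}\cap(I'^{\,t}:u)\bigr)$ — valid because $I'$ contains no $v$, so colon by $g$ collapses to colon by $u$ — and the invariance of projective dimension under multiplication by $g$, the task reduces to bounding $\mbox{pd}\,(I^{t-1}\cap(I'^{\,t}:u))$. Here the naive Mayer--Vietoris estimate from $0\to A\cap B\to A\oplus B\to A+B\to 0$ with $A=I^{t-1}$ is useless, since $\mbox{pd}\,(I^{t-1})=r-1$ already exceeds the target $r-2$; one must instead exploit the explicit monomial structure of $(I'^{\,t}:u)$. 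The point is that colon by the parent variable $u$ contracts the edges meeting $u$ and rewrites $I^{t-1}\cap(I'^{\,t}:u)$ in terms of edge ideals of strictly smaller weighted rooted forests, after which polarization converts these into squarefree ideals whose projective dimension is accessible and can be fed back into the induction. Carrying out this rewriting — and, crucially, verifying that the contracted combinatorial object still obeys the weight hypothesis so that the outer induction applies — is the technical heart of the argument and the step I expect to absorb most of the work.
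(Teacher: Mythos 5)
Your identity $I^{t}=g\,I^{t-1}+I'^{\,t}$ is correct, but the step where you declare this decomposition a Betti splitting ``because it is separated by $v$'' is a genuine gap, and it is the load-bearing step of your whole argument. Having $\mathcal{G}(J)$ consist of all generators divisible by a fixed variable and $\mathcal{G}(K)$ of the rest (an $x_i$-partition in the sense of \cite{FHT}) is \emph{not} by itself sufficient for a Betti splitting; the criterion available here (Lemma \ref{lem1}, i.e.\ \cite[Corollary 2.7]{FHT}) additionally requires $J=g\,I^{t-1}$ to have a linear resolution. That hypothesis fails in the present setting: $g\,I^{t-1}$ has a linear resolution if and only if $I^{t-1}$ does, and already for $t=2$ the ideal $I$ itself is generated in several different degrees as soon as two vertices carry different weights (which the hypotheses allow: leaves may have weight $1$, internal vertices weight $\geq 2$), so it cannot have a linear resolution; even in the equigenerated case a forest with two components such as $I=(x_1x_2^{w},x_3x_4^{w})$ is a complete intersection and is not linearly resolved. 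Hence the formula $\mbox{pd}\,(I^{t})=\max\{\mbox{pd}\,(g\,I^{t-1}),\mbox{pd}\,(I'^{\,t}),\mbox{pd}\,(g\,I^{t-1}\cap I'^{\,t})+1\}$ from Corollary \ref{cor1}(2), and with it your lower bound $\mbox{pd}\,(I^{t})\geq r-1$, is not established; you would need FHT's general Tor-vanishing criterion or an Eliahou--Kervaire splitting function, neither of which you verify.

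Second, even if the splitting were granted, your proof is incomplete exactly where the difficulty lies: the bound $\mbox{pd}\,(g\,I^{t-1}\cap I'^{\,t})\leq r-2$ is what forces the answer, and you explicitly defer it as ``the technical heart of the argument.'' As written, nothing beyond the base cases is proved. It is worth seeing how the paper sidesteps both problems: it never forms this intersection (and uses no Betti splitting here at all), but instead runs the induction through two short exact sequences built from colon ideals, first by $z^{w_z}$ and then by the parent $y$, using the identities $(I(D)^{t},z^{w_z})=(I(D\setminus z)^{t},z^{w_z})$, $(I(D)^{t}:yz^{w_z})=I(D)^{t-1}$ and $((I(D)^{t}:z^{w_z}),y)=(I(D\setminus y)^{t},y)$ of Lemmas \ref{lem8}--\ref{lem10}, together with Lemma \ref{lem2}(2). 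In each sequence the projective dimension of the left-hand term dominates that of the right-hand term, so Lemma \ref{lem5}(1) transfers it to the middle term, giving $\mbox{pd}\,(S/I(D)^{t})=|E(D)|$ directly. The colon $(I(D)^{t}:yz^{w_z})=I(D)^{t-1}$ plays the role your intersection was meant to play, but it lands back on a smaller power of the \emph{same} ideal, where the induction on $t$ applies immediately --- precisely the reduction your outline lacks.
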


Our paper is organized as follows. In section $2$, we recall some
definitions and basic facts used in the following sections. In section $3$, we
provide some exact formulas for the regularity of the powers of the edge
ideals of vertex-weighted line graphs. Meanwhile, we give some examples to show the regularity of the powers of
the edge ideals of vertex-weighted oriented line graphs are related to
direction selection and  the assumption  that
$w(x)\geq 2$  if $d(x)\neq 1$ cannot be dropped.
 In section $4$,   we give some exact formulas for the projective
dimension and the regularity of the powers of the edge
ideals of vertex-weighted  rooted forests. Moreover, we also give some examples to show the projective
dimension and the regularity of the powers of
the edge ideals of vertex-weighted oriented rooted forests are related to
direction selection and  the assumption  that
$w(x)\geq 2$  if $d(x)\neq 1$ cannot be dropped.

\medskip
For all unexplained terminology and additional information, we refer to \cite{JG} (for the theory
of digraphs), \cite{BM} (for graph theory), and \cite{BH,HH2} (for the theory of edge ideals of graphs and
monomial ideals).  We greatfully acknowledge the use of the computer algebra system CoCoA (\cite{Co}) for our experiments.

\medskip

\section{Preliminaries }

In this section, we gather together the needed  definitions and basic facts, which will
be used throughout this paper. However, for more details, we refer the reader to \cite{AF2,BBH1,BM,E,FHT,HT1,HH2,J,JG,MPV,PRT,Z2,Z4}.

A {\em directed graph} or {\em digraph} $D$ consists of a finite set $V(D)$ of vertices, together
with a collection $E(D)$ of ordered pairs of distinct points called edges or
arrows. If $\{u,v\}\in E(D)$ is an edge, we write $uv$ for $\{u,v\}$, which is denoted to be the directed edge
where the direction is from $u$ to $v$ and $u$ (resp. $v$) is called the {\em starting}  point (resp. the {\em ending} point).
Given any digraph $D$, we can associate a graph $G$ on the same vertex set
simply by replacing each arrow by an edge with the same ends. This graph is called the
underlying graph of $D$, denoted by $G(D)$. Conversely, any graph $G$ can be regarded as
a digraph, by replacing each of its edges by just one of the  two oppositely oriented arrows with the same ends.
 Such a digraph is called an {\em  orientation} of $G$.
An orientation of a simple graph is referred to as an {\em simple  oriented}  graph.

Every concept that is valid for graphs automatically applies to digraphs too.
For example, the degree of a vertex $x$ in a digraph $D$, denoted $d(x)$, is simply the degree of $x$ in
$G(D)$. Likewise, a digraph is said to be connected if
its underlying graph is connected. An {\em  oriented}  path or {\em  oriented}  cycle  is an orientation of a
path or cycle in which each vertex dominates its successor in the sequence.
An {\em  oriented} acyclic graph is a simple digraph without oriented cycles.
An {\em  oriented tree} or {\em polytree} is a  oriented acyclic graph formed by orienting the edges of undirected acyclic graphs.
A  {\em rooted tree} is an oriented tree in which all edges  are oriented  either away from or
towards the root.  Unless specifically stated, a rooted tree in this article
 is an oriented tree in which all edges  are oriented away from  the root.
An {\em oriented forest} is a disjoint union of oriented trees. A {\em rooted forest} is a disjoint union of rooted trees.

 A vertex-weighted oriented graph is a triplet $D=(V(D), E(D),w)$, where $V(D)$ is the  vertex set,
$E(D)$ is the edge set and $w$ is a weight function $w: V(D)\rightarrow \mathbb{N}^{+}$, where $N^{+}=\{1,2,\ldots\}$.
Some times for short we denote the vertex set $V(D)$ and edge set $E(D)$
by $V$ and $E$ respectively.
The weight of $x_i\in V$ is $w(x_i)$, denoted by $w_i$  or $w_{x_i}$..
Given  a  vertex-weighted oriented graph $D=(V,E,w)$ with the vertex set $V=\{x_{1},\ldots,x_{n}\}$, we may consider the polynomial ring $S=k[x_{1},\dots, x_{n}]$ in $n$ variables over a field $k$. The edge ideal of $D$,  denoted by $I(D)$, is the ideal of $S$ given by
$$I(D)=(x_ix_j^{w_j}\mid  x_ix_j\in E).$$

If a vertex $x_i$ of $D$ is a source (i.e., has only arrows leaving $x_i$) we shall always
assume $w_i=1$ because in this case the definition of $I(D)$ does not depend on the
weight of $x_i$.

\medskip
For any homogeneous ideal $I$ of the polynomial ring  $S=k[x_{1},\dots,x_{n}]$, there exists a {\em graded
minimal finite free resolution}

\vspace{3mm}
$0\rightarrow \bigoplus\limits_{j}S(-j)^{\beta_{p,j}(I)}\rightarrow \bigoplus\limits_{j}S(-j)^{\beta_{p-1,j}(I)}\rightarrow \cdots\rightarrow \bigoplus\limits_{j}S(-j)^{\beta_{0,j}(I)}\rightarrow I\rightarrow 0,$
where the maps are exact, $p\leq n$, and $S(-j)$ is the $S$-module obtained by shifting
the degrees of $S$ by $j$. The number
$\beta_{i,j}(I)$, the $(i,j)$-th graded Betti number of $I$, is
an invariant of $I$ that equals the number of minimal generators of degree $j$ in the
$i$th syzygy module of $I$.
Of particular interests are the following invariants which measure the ¡°size¡± of the minimal graded
free resolution of $I$.
The projective dimension of $I$, denoted pd\,$(I)$, is defined to be
$$\mbox{pd}\,(I):=\mbox{max}\,\{i\ |\ \beta_{i,j}(I)\neq 0\}.$$
The regularity of $I$, denoted $\mbox{reg}\,(I)$, is defined by
$$\mbox{reg}\,(I):=\mbox{max}\,\{j-i\ |\ \beta_{i,j}(I)\neq 0\}.$$

\vspace{3mm}We now derive some formulas for $\mbox{pd}\,(I)$  and $\mbox{reg}\,(I)$ in some special cases by using some
tools developed in \cite{FHT}.

\begin{Definition} \label{bettispliting}Let $I$  be a monomial ideal, and suppose that there exist  monomial
ideals $J$ and $K$ such that $\mathcal{G}(I)$ is the disjoint union of $\mathcal{G}(J)$ and $\mathcal{G}(K)$, where $\mathcal{G}(I)$ denotes the unique minimal set of monomial generators of $I$. Then $I=J+K$
is a {\em Betti splitting} if
$$\beta_{i,j}(I)=\beta_{i,j}(J)+\beta_{i,j}(K)+\beta_{i-1,j}(J\cap K)\hspace{2mm}\mbox{for all}\hspace{2mm}i,j\geq 0,$$
where $\beta_{i-1,j}(J\cap K)=0\hspace{2mm}  \mbox{if}\hspace{2mm} i=0$.
\end{Definition}

\vspace{3mm}This formula was first obtained for the total Betti numbers by
Eliahou and Kervaire \cite{EK} and extended to the graded case by Fatabbi \cite{F2}.
In  \cite{FHT}, the authors describe some sufficient conditions for an
ideal $I$ to have a Betti splitting. We need  the following lemma.

\begin{Lemma}\label{lem1}(\cite[Corollary 2.7]{FHT}).
Suppose that $I=J+K$ where $\mathcal{G}(J)$ contains all
the generators of $I$ divisible by some variable $x_{i}$ and $\mathcal{G}(K)$ is a nonempty set containing
the remaining generators of $I$. If $J$ has a linear resolution, then $I=J+K$ is a Betti
splitting. Hence
$$\mbox{reg}\,(I)=\mbox{max}\{\mbox{reg}\,(J),\mbox{reg}\,(K),\mbox{reg}\,(J\cap K)-1\}.$$
\end{Lemma}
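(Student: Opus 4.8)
The plan is to reprove the statement through the $\Tor$-vanishing criterion for Betti splittings and then read off the regularity formula. First I would record the Mayer--Vietoris short exact sequence $0\to J\cap K\to J\oplus K\to I\to 0$, with maps $f\mapsto(f,-f)$ and $(g,h)\mapsto g+h$. Tensoring with $k$ and passing to the induced long exact sequence in $\Tor$, a dimension count in each degree shows that the splitting identity $\beta_{p,j}(I)=\beta_{p,j}(J)+\beta_{p,j}(K)+\beta_{p-1,j}(J\cap K)$ holds for all $p,j$ precisely when the map
$$\alpha_p\colon \Tor_p(J\cap K,k)\To \Tor_p(J,k)\oplus\Tor_p(K,k)$$
induced by the inclusion $J\cap K\hookrightarrow J\oplus K$ vanishes for every $p\geq 1$ (this is the criterion of \cite{FHT}); the case $p=0$ is automatic since $\mathcal G(I)$ is the disjoint union of $\mathcal G(J)$ and $\mathcal G(K)$. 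Thus the task reduces to killing both components of each $\alpha_p$.

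I would kill the two components by two separate ``disjointness'' arguments. Each minimal generator of $J\cap K$ is a least common multiple $\lcm(u,v)$ with $u\in\mathcal G(J)$ and $v\in\mathcal G(K)$, hence is divisible by $x_i$ because $u$ is; since the multidegrees supporting the minimal resolution of a monomial ideal are least common multiples of subsets of its generators, every $\mathbf a$ with $\beta_{p,\mathbf a}(J\cap K)\neq 0$ satisfies $a_i\geq 1$. By contrast the generators of $K$ avoid $x_i$, so every $\mathbf a$ with $\beta_{p,\mathbf a}(K)\neq 0$ has $a_i=0$. As $\alpha_p$ is multigraded and these supports are disjoint in the $x_i$-exponent, the $K$-component of $\alpha_p$ must vanish. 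For the $J$-component I would invoke the linear resolution hypothesis: it forces $J$ to be generated in a single degree $d$, and since $v\nmid u$ by minimality of $\mathcal G(I)$, the monomial $\lcm(u,v)$ strictly exceeds $u$, so every generator of $J\cap K$ has degree $\geq d+1$; hence $\Tor_p(J\cap K,k)$ lives in total degrees $\geq p+d+1$, whereas the linear resolution confines $\Tor_p(J,k)$ to degree $p+d$, and disjoint total degrees force the $J$-component to vanish too. The crux---and the step I expect to be the main obstacle---is exactly this vanishing: the $x_i$-structure of $\mathcal G(J)$ is what separates the $x_i$-exponents for the $K$-component, while the linear resolution of $J$ is what separates the total degrees for the $J$-component, and one must recognize that these two independent separations are jointly needed and together exhaust all of $\alpha_p$.

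Once $I=J+K$ is known to be a Betti splitting, the regularity equality is pure bookkeeping on $\max\{j-p\}$ taken over the right-hand side: the $J$- and $K$-terms contribute $\reg(J)$ and $\reg(K)$, while the $J\cap K$-term is shifted one homological step, so that $j-p=(j-(p-1))-1$ contributes $\reg(J\cap K)-1$. Combining these gives $\reg(I)=\max\{\reg(J),\reg(K),\reg(J\cap K)-1\}$, as claimed.
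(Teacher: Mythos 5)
Your proof is correct, and it is essentially the argument of the cited source: the Mayer--Vietoris sequence $0\to J\cap K\to J\oplus K\to I\to 0$ with the $\Tor$-vanishing criterion for Betti splittings, the multigraded ($x_i$-exponent) disjointness killing the $K$-component, and the linear-resolution/total-degree disjointness killing the $J$-component is exactly how \cite[Corollary 2.7]{FHT} is proved. Note the paper itself offers no proof of this lemma---it simply quotes it from \cite{FHT}---so your reconstruction is to be compared with that reference, and it matches it, including the final bookkeeping that converts the splitting identity $\beta_{p,j}(I)=\beta_{p,j}(J)+\beta_{p,j}(K)+\beta_{p-1,j}(J\cap K)$ into $\reg(I)=\max\{\reg(J),\reg(K),\reg(J\cap K)-1\}$.
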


\medskip
When $I$ is a Betti
splitting ideal, Definition \ref{bettispliting} implies the following results:
\begin{Corollary} \label{cor1}
If $I=J+K$ is a Betti splitting ideal, then
\begin{itemize}
 \item[(1)]$\mbox{reg}\,(I)=\mbox{max}\{\mbox{reg}\,(J),\mbox{reg}\,(K),\mbox{reg}\,(J\cap K)-1\}$,
 \item[(2)] $\mbox{pd}\,(I)=\mbox{max}\{\mbox{pd}\,(J),\mbox{pd}\,(K),\mbox{pd}\,(J\cap K)+1\}$.
\end{itemize}
\end{Corollary}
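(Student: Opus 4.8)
The plan is to read both identities directly off the defining equation of a Betti splitting, exploiting that graded Betti numbers are nonnegative integers so that no cancellation can occur in the sum. Throughout I use that, by Definition \ref{bettispliting}, $\beta_{i,j}(I)=\beta_{i,j}(J)+\beta_{i,j}(K)+\beta_{i-1,j}(J\cap K)$ with the convention $\beta_{-1,j}(J\cap K)=0$, and hence $\beta_{i,j}(I)\neq 0$ precisely when at least one of the three nonnegative summands is nonzero. This equivalence is the only fact I need, and everything reduces to bookkeeping on the bigraded indices.

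For (1), I would bound $\reg(I)=\max\{j-i\mid \beta_{i,j}(I)\neq 0\}$ by examining which summand is responsible for a given nonzero $\beta_{i,j}(I)$. If $\beta_{i,j}(J)\neq 0$ then $j-i\le \reg(J)$; if $\beta_{i,j}(K)\neq 0$ then $j-i\le \reg(K)$; and if $\beta_{i-1,j}(J\cap K)\neq 0$ then writing $j-i=(j-(i-1))-1\le \reg(J\cap K)-1$. This gives the $\le$ direction. Conversely, each of the three quantities on the right is attained: choosing $(i,j)$ that realizes $\reg(J)=j-i$ with $\beta_{i,j}(J)\neq 0$ forces $\beta_{i,j}(I)\ge \beta_{i,j}(J)>0$, so $\reg(I)\ge \reg(J)$, and symmetrically for $\reg(K)$ and, after the index shift, for $\reg(J\cap K)-1$. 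Combining the two directions yields the stated maximum.

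For (2), the argument is identical in structure, now tracking the homological index rather than $j-i$. A nonzero $\beta_{i,j}(J)$ or $\beta_{i,j}(K)$ contributes an index $i$ bounded by $\pd(J)$ or $\pd(K)$ respectively, while a nonzero $\beta_{i-1,j}(J\cap K)$ contributes the index $i$ subject only to $i-1\le \pd(J\cap K)$, i.e.\ $i\le \pd(J\cap K)+1$; this is exactly the source of the $+1$. Nonnegativity again supplies the reverse inequalities, each extremal index on the right being attained by some nonzero summand of $\beta_{i,j}(I)$, so $\pd(I)=\max\{\pd(J),\pd(K),\pd(J\cap K)+1\}$.

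The computation is entirely routine and there is no genuine obstacle. The only point demanding care is the single index shift in the third summand, namely $\beta_{i-1,j}$ rather than $\beta_{i,j}$, which is precisely what produces the $-1$ for regularity and the $+1$ for projective dimension; one must also keep the boundary convention $\beta_{-1,j}(J\cap K)=0$ in mind so that the $i=0$ row contributes nothing spurious.
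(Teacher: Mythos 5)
Your proof is correct and is exactly the argument the paper intends: the paper states Corollary \ref{cor1} as an immediate consequence of Definition \ref{bettispliting}, and your direct bookkeeping --- using nonnegativity of the graded Betti numbers to rule out cancellation, then tracking the index shift $\beta_{i-1,j}(J\cap K)$ that produces the $-1$ for regularity and the $+1$ for projective dimension --- is the standard verification left implicit there. Nothing is missing.
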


\medskip
We need the following Lemma:
\begin{Lemma}
\label{lem2}(\cite[Lemma 2.2 and  Lemma 3.2 ]{HT1})
Let $S_{1}=k[x_{1},\dots,x_{m}]$, $S_{2}=k[x_{m+1},\dots,x_{n}]$ and $S=k[x_{1},\dots,x_{n}]$ be three polynomial rings, $I\subseteq S_{1}$ and
$J\subseteq S_{2}$ be two proper non-zero homogeneous  ideals.  Then
\begin{itemize}
\item[(1)] $\mbox{reg}\,(S/(I+J))=\mbox{reg}\,(S_{1}/I)+\mbox{reg}\,(S_{2}/J)$,
\item[(2)] $\mbox{pd}\,(S/(I+J))=\mbox{pd}\,(S_{1}/I)+\mbox{pd}\,(S_{2}/J)$.
\end{itemize}
\end{Lemma}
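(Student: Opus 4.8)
The plan is to recognize that, because $I$ and $J$ are built from disjoint sets of variables, the quotient $S/(I+J)$ is a tensor product over the base field of the two factor quotients, and then to resolve it by the tensor product of their minimal free resolutions. Both invariants can then be read off from the convolution formula for the resulting Betti numbers. First I would record the algebra isomorphism: since $\{x_1,\dots,x_m\}$ and $\{x_{m+1},\dots,x_n\}$ are disjoint, $S\cong S_1\otimes_k S_2$ as graded $k$-algebras, and under this identification
$$S/(I+J)\;\cong\;(S_1/I)\otimes_k(S_2/J)$$
as graded $S$-modules, where $I+J$ denotes the extension of $I$ and $J$ to $S$.

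Next, let $F_\bullet\longrightarrow S_1/I$ and $G_\bullet\longrightarrow S_2/J$ be the graded minimal free resolutions over $S_1$ and $S_2$, respectively, and form the total tensor complex $(F\otimes_k G)_\bullet$ with $(F\otimes_k G)_n=\bigoplus_{i+j=n}F_i\otimes_k G_j$, viewed as a complex of graded $S$-modules. I claim this is a minimal graded free resolution of $S/(I+J)$ over $S$, which requires three checks. \emph{Freeness:} writing $F_i=\bigoplus S_1(-a)$ and $G_j=\bigoplus S_2(-b)$ gives $F_i\otimes_k G_j=\bigoplus S(-a-b)$, a graded free $S$-module. \emph{Exactness:} because $k$ is a field every $k$-module is flat, so the K\"unneth formula yields $H_n(F\otimes_k G)\cong\bigoplus_{i+j=n}H_i(F)\otimes_k H_j(G)$, which vanishes for $n>0$ and equals $(S_1/I)\otimes_k(S_2/J)\cong S/(I+J)$ for $n=0$; hence the tensor complex is a free resolution. \emph{Minimality:} in the standard sign convention the differential of the tensor complex has matrix entries drawn from the entries of the differentials of $F_\bullet$ and of $G_\bullet$, which by minimality lie in the graded maximal ideals $\mm_{S_1}$ and $\mm_{S_2}$ and therefore in $\mm_S=\mm_{S_1}S+\mm_{S_2}S$, so no entry is a unit and the resolution is minimal.

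Finally I would read off the invariants from minimality, which gives the convolution
$$\beta_{n,d}^{S}(S/(I+J))=\sum_{i+j=n}\;\sum_{a+b=d}\beta_{i,a}^{S_1}(S_1/I)\,\beta_{j,b}^{S_2}(S_2/J).$$
For part (2), the largest $n$ admitting a nonzero $\beta_{n,d}^S$ is $\mbox{pd}\,(S_1/I)+\mbox{pd}\,(S_2/J)$: since all Betti numbers are nonnegative and a product of two nonzero ones is nonzero, the top homological degrees of the two factors combine without cancellation. For part (1), each contributing term satisfies $d-n=(a-i)+(b-j)$, so $\max\{d-n\}=\max\{a-i\}+\max\{b-j\}=\mbox{reg}\,(S_1/I)+\mbox{reg}\,(S_2/J)$, the two separate maxima again being attainable simultaneously in a single nonvanishing summand. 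The hypotheses that $I$ and $J$ are proper and nonzero serve only to guarantee that both factors are nontrivial quotients so that the resolutions behave as expected.

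The main obstacle is the technical backbone of the second paragraph: confirming that the tensor-product complex is both exact and minimal. Over a field these are classical facts, so the real care is bookkeeping, namely tracking the grading shift $S(-a-b)$ through the K\"unneth identification and checking the sign conventions in the tensor differential to be sure that no entry becomes a unit and that no graded degree is misaligned; once that is in place, the two formulas follow formally from the Betti-number convolution.
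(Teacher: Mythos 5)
Your proof is correct: the identification $S/(I+J)\cong (S_1/I)\otimes_k(S_2/J)$, the minimality and exactness of the tensor product of the two minimal resolutions, and the resulting convolution $\beta_{n,d}^{S}(S/(I+J))=\sum_{i+j=n,\,a+b=d}\beta_{i,a}^{S_1}(S_1/I)\,\beta_{j,b}^{S_2}(S_2/J)$ all hold, and since Betti numbers are nonnegative the support of the convolution is the Minkowski sum of the two supports, from which both formulas follow as you say. The paper gives no proof of its own here --- it simply cites \cite{HT1} --- and your tensor-product-of-minimal-resolutions argument is exactly the standard proof underlying that reference, so there is nothing to flag.
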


\medskip
  Let $\mathcal{G}(I)$ denote the minimal set of generators of a monomial ideal $I\subset S$
 and let $u\in S$ be a monomial, we set $\mbox{supp}(u)=\{x_i: x_i|u\}$. If $\mathcal{G}(I)=\{u_1,\ldots,u_m\}$, we set $\mbox{supp}(I)=\bigcup\limits_{i=1}^{m}\mbox{supp}(u_i)$. The following lemma is well known.
 \begin{Lemma}
\label{lem3} Let  $I, J=(u)$ be two monomial ideals  such that $\mbox{supp}\,(u)\cap \mbox{supp}\,(I)=\emptyset$. If the degree of $u$  is  $d$. Then
\begin{itemize}
\item[(1)] $\mbox{reg}\,(J)=d$,
\item[(2)]$\mbox{reg}\,(JI)=\mbox{reg}\,(I)+d$.
\end{itemize}
\end{Lemma}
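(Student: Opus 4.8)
The plan is to treat the two parts separately, reducing both to a single observation: since $S$ is an integral domain and $u\neq 0$, multiplication by $u$ is an injective, degree-$d$, $S$-linear map.

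For part (1) I would note that $J=(u)$ is a principal ideal generated by one monomial, hence a free $S$-module of rank one whose generator sits in degree $d$; concretely the map $S(-d)\to J$ sending $1\mapsto u$ is a graded isomorphism. Thus the only nonvanishing Betti number of $J$ is $\beta_{0,d}(J)=1$, and therefore $\reg(J)=d-0=d$. (This part does not use the disjoint-support hypothesis at all.) For part (2), the product $JI=uI$ does not fit directly into the sum-of-ideals framework of Lemma \ref{lem2}, so instead I would produce an explicit graded isomorphism $JI\cong I(-d)$. Because $S$ is a domain and $u\neq 0$, multiplication by $u$ gives an injective $S$-linear map $\mu_u\colon I\to S$ of degree $d$ whose image is precisely $uI=JI$; hence $\mu_u$ is a degree-$0$ graded isomorphism $I(-d)\xrightarrow{\ \sim\ }JI$. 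Consequently $\beta_{i,j}(JI)=\beta_{i,\,j-d}(I)$ for all $i,j$, and taking the maximum of $j-i$ over the nonvanishing Betti numbers yields $\reg(JI)=\reg(I)+d$.

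The one genuine subtlety — and the place where the hypothesis $\supp(u)\cap\supp(I)=\emptyset$ earns its keep — is ensuring that the $\reg(I)$ appearing on the right is unambiguous and that the shifted resolution is still \emph{minimal}. Since no variable dividing $u$ occurs in any generator of $I$, the set $u\,\mathcal{G}(I)$ is again a minimal generating set of $JI$, so no cancellation is introduced by multiplying through by $u$; moreover, viewing $I$ inside the full ring $S$ rather than inside the subring on $\supp(I)$ leaves its graded Betti numbers unchanged, because adjoining the remaining variables is a free, hence faithfully flat, base change under which regularity is preserved. I expect this bookkeeping — verifying that the minimal free resolution of $I$ is transported verbatim, up to the overall degree shift by $d$, to a minimal free resolution of $JI$ — to be the main (and essentially only) obstacle; once it is in place, the numerical conclusion $\reg(JI)=\reg(I)+d$ is immediate.
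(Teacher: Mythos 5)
Your proof is correct, and there is nothing in the paper to compare it against: the paper states this lemma as ``well known'' and gives no argument, so your write-up fills a genuine gap. Your route is the natural one: $J=(u)\cong S(-d)$ is free, giving (1); and for (2) multiplication by $u$ is injective because $S$ is a domain, so it gives a degree-preserving isomorphism $I(-d)\cong uI=JI$ of graded $S$-modules. Two remarks. First, your closing ``bookkeeping'' paragraph is unnecessary: graded Betti numbers are isomorphism invariants, $\beta_{i,j}(M)=\dim_k\Tor_i^S(M,k)_j$, so the isomorphism $JI\cong I(-d)$ already yields $\beta_{i,j}(JI)=\beta_{i,j-d}(I)$ without any need to verify that the transported resolution stays minimal or that $u\,\mathcal{G}(I)$ minimally generates $uI$; there is also no ambiguity about where $\reg(I)$ is computed, since both $I$ and $JI$ are ideals of the same ring $S$. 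Second, and worth noting explicitly: your argument never uses that $u$ is a monomial, that $I$ is a monomial ideal, or that $\supp(u)\cap\supp(I)=\emptyset$; it proves $\reg(uI)=\reg(I)+d$ for an arbitrary nonzero homogeneous $u$ of degree $d$ and arbitrary homogeneous ideal $I$. This extra generality is not idle, because the paper later applies Lemma \ref{lem3}(2) in the proof of Theorem \ref{thm1} to $u_{r}J^{t}$ and $u_{r-1}K^{2}$, where the $u_i$ are homogeneous polynomials in a regular sequence that need not be monomials and need not have support disjoint from the ideal they multiply; your proof covers those applications, whereas the lemma as literally stated does not.
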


\medskip
\begin{Definition} \label{polarization}
Suppose that $u=x_1^{a_1}\cdots x_n^{a_n}$ is a monomial in $S$. We define the {\it polarization} of $u$ to be
the squarefree monomial $$\mathcal{P}(u)=x_{11}x_{12}\cdots x_{1a_1} x_{21}\cdots x_{2a_2}\cdots x_{n1}\cdots x_{na_n}$$
in the polynomial ring $S^{\mathcal{P}}=k[x_{ij}\mid 1\leq i\leq n, 1\leq j\leq a_i]$.
If $I\subset S$ is a monomial ideal with $\mathcal{G}(I)=\{u_1,\ldots,u_m\}$,  the  {\it polarization}
of $I$,  denoted by $I^{\mathcal{P}}$, is defined as:
$$I^{\mathcal{P}}=(\mathcal{P}(u_1),\ldots,\mathcal{P}(u_m)),$$
which is a squarefree monomial ideal in the polynomial ring $S^{\mathcal{P}}$.
\end{Definition}

\medskip
Here is an example of how polarization works.

\begin{Example} \label{example1} Let $I(D)=(x_1x_2^{3},x_2x_3,x_3x_4^{2},x_4x_5^{5})$ be the edge ideal of a vertex-weighted rooted tree $D$, then the polarization of $I(D)$ is the ideal  $I(D)^{\mathcal{P}}=(x_{11}x_{21}x_{22}x_{23},\\
x_{21}x_{31},x_{31}x_{41}x_{42},x_{41}x_{51}x_{52}x_{53}x_{54}x_{55})$.
\end{Example}

\medskip
A monomial ideal $I$ and its polarization $I^{\mathcal{P}}$ share many homological and
algebraic properties.  The following is a very useful property of polarization.

\begin{Lemma}
\label{lem4}(\cite[Corollary 1.6.3]{HH2}) Let $I\subset S$ be a monomial ideal and $I^{\mathcal{P}}\subset S^{\mathcal{P}}$ its polarization.
Then
\begin{itemize}
\item[(1)] $\beta_{ij}(I)=\beta_{ij}(I^{\mathcal{P}})$ for all $i$ and $j$,
\item[(2)] $\mbox{reg}\,(I)=\mbox{reg}\,(I^{\mathcal{P}})$,
\item[(2)] $\mbox{pd}\,(I)=\mbox{pd}\,(I^{\mathcal{P}})$.
\end{itemize}
\end{Lemma}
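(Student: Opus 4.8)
The plan is to deduce everything from part (1), the equality of graded Betti numbers, since both $\reg$ and $\pd$ are by definition the maxima of $j-i$ and of $i$ over the pairs $(i,j)$ with $\beta_{i,j}\neq 0$; once $\beta_{i,j}(I)=\beta_{i,j}(I^{\mathcal P})$ is known for all $i,j$, the index sets defining the two regularities (and the two projective dimensions) coincide, so (2) and (3) are immediate. Since $\beta_{i,j}(I)=\beta_{i+1,j}(S/I)$ and likewise over $S^{\mathcal P}$, it is equivalent, and more convenient, to prove $\beta_{i,j}(S/I)=\beta_{i,j}(S^{\mathcal P}/I^{\mathcal P})$ for all $i,j$. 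The whole argument rests on realizing $S/I$ as the quotient of $S^{\mathcal P}/I^{\mathcal P}$ by a regular sequence of linear forms, together with the standard fact that factoring a finitely generated graded module by a linear non-zerodivisor leaves its graded Betti numbers unchanged.

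For the construction, set $a_i=\max\{c: x_i^{c}\ \text{divides some element of}\ \mathcal G(I)\}$, so that $S^{\mathcal P}=k[x_{ij}\mid 1\le i\le n,\ 1\le j\le a_i]$. First I would introduce the linear forms $\theta_{ij}=x_{ij}-x_{i1}$ for $2\le j\le a_i$ and $1\le i\le n$, and let $\Theta$ denote their collection. The $k$-algebra map $S^{\mathcal P}\to S$ sending $x_{ij}\mapsto x_i$ is surjective, carries each polarized generator $\mathcal P(u)$ back to $u$, hence sends $I^{\mathcal P}$ onto $I$, and has kernel generated exactly by $\Theta$. This yields a graded isomorphism $(S^{\mathcal P}/I^{\mathcal P})/(\Theta)\cong S/I$.

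The heart of the proof, and the step I expect to be the main obstacle, is showing that $\Theta$ is a regular sequence on $S^{\mathcal P}/I^{\mathcal P}$. I would process the differences one layer at a time and, at each stage, use the criterion that a linear form is a non-zerodivisor on $S^{\mathcal P}/J$ precisely when it lies in no associated prime of $J$. The favourable structural feature of polarization is that $I^{\mathcal P}$ is squarefree and that each $\mathcal P(u)$ uses the layer variables $x_{i1},x_{i2},\dots$ as an initial segment, with no omissions; this laddered shape is what forces $x_{ij}-x_{i1}$ to avoid every prime in $\Ass(J)$ at the corresponding step, so regularity is preserved as the sequence is built up. Controlling these associated primes, equivalently checking that the colon ideals $(J:\theta_{ij})$ behave as expected, is the only genuinely delicate point.

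Finally I would invoke the homological input: if $\theta$ is a linear non-zerodivisor on a graded module $M$, then reducing a minimal graded free resolution of $M$ modulo $\theta$ produces a minimal graded free resolution of $M/\theta M$ over the smaller polynomial ring, whence $\beta_{i,j}(M)=\beta_{i,j}(M/\theta M)$ for all $i,j$. Applying this successively to the elements of $\Theta$, each still a non-zerodivisor by the regular-sequence claim, gives $\beta_{i,j}(S^{\mathcal P}/I^{\mathcal P})=\beta_{i,j}(S/I)$, hence $\beta_{i,j}(I^{\mathcal P})=\beta_{i,j}(I)$, which is (1); parts (2) and (3) then follow as noted in the first paragraph.
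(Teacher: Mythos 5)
Your proposal is correct and takes essentially the same approach as the paper: the paper gives no independent argument for this lemma, simply citing \cite[Corollary 1.6.3]{HH2}, and the proof there is exactly your reduction --- realize $S/I$ as $(S^{\mathcal{P}}/I^{\mathcal{P}})/(\Theta)$ for the sequence $\Theta$ of differences $x_{ij}-x_{i1}$, show $\Theta$ is a regular sequence on $S^{\mathcal{P}}/I^{\mathcal{P}}$ (this is \cite[Proposition 1.6.2]{HH2}, the step you rightly single out as the delicate one, handled there by the associated-prime/colon-ideal analysis you describe), and conclude that graded Betti numbers are unchanged, from which the regularity and projective dimension statements follow by definition. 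No gap to report.
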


\medskip
The following lemma can be used for computing the projective dimension and the regularity of  an  ideal.
\begin{Lemma}
\label{lem5}(\cite[Lemma 1.1 and Lemma 1.2]{HTT})  Let\ \ $0\rightarrow A \rightarrow  B \rightarrow  C \rightarrow 0$\ \  be a short exact sequence of finitely generated graded $S$-modules.
Then
\begin{itemize}
\item[(1)] $\mbox{pd}\,(B)= \mbox{pd}\,(A)$ if $\mbox{pd}\,(A)\geq \mbox{pd}\,(C)$,
\item[(2)]$\mbox{reg}\,(B)\leq \mbox{max}\, \{\mbox{reg}\,(A),\mbox{reg}\,(C)\}$,
\item[(3)]$\mbox{reg}\,(B)= \mbox{reg}\,(C)$ if $\mbox{reg}\,(C)\geq \mbox{reg}\,(A)$,
\item[(4)]$\mbox{reg}\,(B)= \mbox{reg}\,(A)$ if $\mbox{reg}\,(A)>\mbox{reg}\,(C)+1$,
\item[(5)]$\mbox{reg}\,(C)= \mbox{reg}\,(A)-1$ if $\mbox{reg}\,(A)> \mbox{reg}\,(B)$,
\item[(6)]$\mbox{reg}\,(C)= \mbox{reg}\,(B)$ if $\mbox{reg}\,(A)< \mbox{reg}\,(B)$.
\end{itemize}
\end{Lemma}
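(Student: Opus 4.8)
The whole lemma concerns only the graded Betti numbers, so the plan is to reduce everything to a single homological device: apply the graded functor $-\otimes_S k$ to the short exact sequence and read off the associated long exact sequence of Tor modules
$$\cdots \to \Tor_i(k,A)_j \to \Tor_i(k,B)_j \to \Tor_i(k,C)_j \xrightarrow{\partial} \Tor_{i-1}(k,A)_j \to \cdots,$$
which is exact in each internal degree $j$ and each homological degree $i$, with the connecting map $\partial$ lowering homological degree by one while preserving internal degree. Recalling that $\beta_{i,j}(M)=\dim_k\Tor_i(k,M)_j$, I would restate the two invariants as extremal nonvanishing statements, $\pd(M)=\max\{i:\Tor_i(k,M)\neq 0\}$ and $\reg(M)=\max\{j-i:\Tor_i(k,M)_j\neq 0\}$. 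With this dictionary every item becomes a bookkeeping argument about which of the three neighbouring Tor modules in the sequence can be nonzero in a prescribed bidegree.

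For item (1) I would localise at homological degree $i=\pd(A)=:p$. The hypothesis $p\ge\pd(C)$ forces $\Tor_{p+1}(k,C)=0$, so $\Tor_p(k,A)\hookrightarrow\Tor_p(k,B)$, and nonvanishing of the source gives $\pd(B)\ge p$; while for $i>p$ both $\Tor_i(k,A)$ and $\Tor_i(k,C)$ vanish, forcing $\Tor_i(k,B)=0$ and hence $\pd(B)\le p$. Item (2) is the purely formal observation that exactness at the middle term $\Tor_i(k,B)_j$ makes it impossible for this module to be nonzero when both its neighbours $\Tor_i(k,A)_j$ and $\Tor_i(k,C)_j$ vanish; taking the maximum of $j-i$ over surviving bidegrees yields $\reg(B)\le\max\{\reg(A),\reg(C)\}$.

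Items (3), (4) and (6) each combine the inequality from (2) (or its analogue) with a matching lower bound obtained by the same template: pick a bidegree $(i_0,j_0)$ realising the regularity of the \emph{dominant} module and argue that the adjacent Tor term is forced to vanish by the regularity hypothesis, so the relevant map to or from $B$ is injective or surjective there and transports the extremal class. For instance, in (3) with $\reg(C)\ge\reg(A)$ I choose $j_0-i_0=\reg(C)$ realising $\reg(C)$ in $C$; then $\Tor_{i_0-1}(k,A)_{j_0}$ sits in internal shift $\reg(C)+1>\reg(A)$ and vanishes, so $\Tor_{i_0}(k,B)_{j_0}\twoheadrightarrow\Tor_{i_0}(k,C)_{j_0}\neq 0$ and $\reg(B)\ge\reg(C)$. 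Items (4) and (6) run identically, using $\reg(A)>\reg(C)+1$ (so $\Tor_{i_0+1}(k,C)_{j_0}$ vanishes and $A\hookrightarrow B$ in the top degree) and $\reg(A)<\reg(B)$ (so $\Tor_{i_0}(k,A)_{j_0}$ vanishes and $B\hookrightarrow C$ in the top degree) respectively.

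The main obstacle is item (5), which asserts an exact value $\reg(C)=\reg(A)-1$ from the single hypothesis $\reg(A)>\reg(B)$ and will not drop out of the naive inequality in (2). For the lower bound I would take $(i_0,j_0)$ with $j_0-i_0=\reg(A)$ realising $\reg(A)$ in $A$; since $j_0-i_0=\reg(A)>\reg(B)$ the term $\Tor_{i_0}(k,B)_{j_0}$ vanishes, so the connecting map $\partial\colon\Tor_{i_0+1}(k,C)_{j_0}\to\Tor_{i_0}(k,A)_{j_0}$ is surjective onto a nonzero target, producing a nonzero $\beta_{i_0+1,j_0}(C)$ and hence $\reg(C)\ge j_0-(i_0+1)=\reg(A)-1$. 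For the matching upper bound I would first establish the third standard regularity inequality $\reg(C)\le\max\{\reg(A)-1,\reg(B)\}$ by examining exactness at $\Tor_i(k,C)_j$: a nonzero class there must either map nontrivially under $\partial$ into $\Tor_{i-1}(k,A)_j$ (contributing $j-i\le\reg(A)-1$) or be hit from $\Tor_i(k,B)_j$ (contributing $j-i\le\reg(B)$). Under $\reg(A)>\reg(B)$ the maximum collapses to $\reg(A)-1$, and combining the two bounds gives equality. The only point requiring care throughout is the homological shift in $\partial$, which moves a class in $\Tor_i(k,C)$ to one in $\Tor_{i-1}(k,A)$ and is precisely what produces the $-1$ appearing in (5).
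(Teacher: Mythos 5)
Your proof is correct in every part, and all the bidegree bookkeeping checks out: the injection $\Tor_p(k,A)\hookrightarrow\Tor_p(k,B)$ in (1), the surjections/injections transporting the extremal Betti number in (3), (4) and (6), and in (5) the surjectivity of the connecting map $\partial\colon\Tor_{i_0+1}(k,C)_{j_0}\to\Tor_{i_0}(k,A)_{j_0}$, whose homological shift is indeed exactly where the $-1$ comes from. Note, however, that the paper contains no proof to compare against: the lemma is quoted from \cite{HTT}, Lemmas 1.1 and 1.2, where the regularity statements are derived from the long exact sequence of local cohomology modules $\Coh{i}{-}$ via the characterization $\reg(M)=\max\{i+j : \Coh{i}{M}_j\neq 0\}$, and the projective dimension statement is essentially the depth lemma transported through the Auslander--Buchsbaum formula. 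Your route through $\Tor_\bullet(k,-)$ and graded Betti numbers is the natural dual and is equivalent for finitely generated graded $S$-modules; it has the advantage of matching the definitions of $\reg$ and $\pd$ actually used in this paper (extremal nonvanishing of $\beta_{i,j}$), so nothing needs to be translated, and it yields all six items uniformly from one long exact sequence. One presentational point: the upper bound in (6) silently relies on the third inequality $\reg(C)\leq\max\{\reg(A)-1,\reg(B)\}$, which you establish only inside your treatment of (5); in a write-up, state that inequality alongside (2) so that both (5) and (6) can invoke it cleanly, since the phrase ``(or its analogue)'' is doing real work there.
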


\vspace{5mm}
\section{Regularity of  powers of edge ideals of vertex-weighted oriented  line graphs}

\vspace{5mm}In this section, by using the approaches of Betti splitting and polarization, we will provide some formulas for the regularity of the powers of the edge ideals of  vertex-weighted oriented  line graphs.  We also give some examples to show the assumptions that
  $w(x)\geq 2$  if $d(x)\neq 1$ in  oriented
line graph cannot be dropped.
We shall start with the following two lemmas.

\begin{Lemma}
\label{lem6}(\cite[Lemma 1.3]{HTT}) Let $R$ be the polynomial ring over a field and let $I$ be a proper homogeneous
ideal in $R$. Then
\begin{itemize}
\item[(1)] $\mbox{pd}\,(I)=\mbox{pd}\,(R/I)-1$,
\item[(2)] $\mbox{reg}\, (I)=\mbox{reg}\,(R/I)+1$.
\end{itemize}
\end{Lemma}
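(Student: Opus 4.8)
The plan is to compare the graded minimal free resolutions of $I$ and of $R/I$ directly, exploiting the short exact sequence
$$0\rightarrow I\rightarrow R\rightarrow R/I\rightarrow 0.$$
First I would take a graded minimal finite free resolution of $I$, say
$$0\rightarrow F_p\rightarrow\cdots\rightarrow F_1\rightarrow F_0\rightarrow I\rightarrow 0,$$
with $F_i=\bigoplus_j R(-j)^{\beta_{i,j}(I)}$ and $p=\mbox{pd}\,(I)$. Splicing this resolution with the surjection $R\rightarrow R/I$, using the composite $F_0\rightarrow I\hookrightarrow R$ as the new differential into $R$, produces the graded complex
$$0\rightarrow F_p\rightarrow\cdots\rightarrow F_1\rightarrow F_0\rightarrow R\rightarrow R/I\rightarrow 0.$$
Since $F_0\rightarrow R$ factors through the injection $I\hookrightarrow R$, its kernel equals $\ker(F_0\rightarrow I)=\mbox{Im}\,(F_1\rightarrow F_0)$, and its image is $I=\ker(R\rightarrow R/I)$; hence the spliced complex is a graded free resolution of $R/I$.

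The key point is then minimality. Because $I$ is a proper ideal, each minimal generator of $I$ lies in the graded maximal ideal $\mm$, so every entry of the map $F_0\rightarrow R$ belongs to $\mm$; combined with the minimality of the original resolution of $I$ (whose higher differentials $F_i\rightarrow F_{i-1}$ already have entries in $\mm$), this shows the spliced complex is itself a minimal free resolution of $R/I$. Reading off its graded free modules gives $\beta_{0,0}(R/I)=1$, coming from the copy of $R$ in homological degree $0$, together with the shift
$$\beta_{i,j}(R/I)=\beta_{i-1,j}(I)\qquad\text{for all }i\geq 1.$$

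From this shift part (1) is immediate: the top nonvanishing homological degree of $R/I$ is exactly one more than that of $I$, so $\mbox{pd}\,(R/I)=\mbox{pd}\,(I)+1$, which rearranges to $\mbox{pd}\,(I)=\mbox{pd}\,(R/I)-1$. For part (2), the degree-$0$ term $R$ contributes $0-0=0$ to the regularity, while every other term contributes $j-i=(j-(i-1))-1$ for some nonzero $\beta_{i-1,j}(I)$; hence $\mbox{reg}\,(R/I)=\mbox{max}\,\{0,\mbox{reg}\,(I)-1\}$. Since $I$ is a nonzero proper homogeneous ideal, its minimal generators have positive degree, so $\beta_{0,j}(I)\neq 0$ for some $j\geq 1$ and thus $\mbox{reg}\,(I)\geq 1$; the maximum is therefore $\mbox{reg}\,(I)-1$, giving $\mbox{reg}\,(I)=\mbox{reg}\,(R/I)+1$.

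The main obstacle here is essentially bookkeeping rather than any deep input. The two things that genuinely need care are the minimality of the spliced complex, which relies precisely on $I\subseteq\mm$ (i.e. on $I$ being proper), and the verification that the single extra generator in homological degree $0$ of $R/I$, contributing $j-i=0$ to the regularity, is never the dominant term — which is exactly the bound $\mbox{reg}\,(I)\geq 1$ for a nonzero proper homogeneous ideal. Once both are in hand, parts (1) and (2) follow by simply tracking homological degrees and degree shifts.
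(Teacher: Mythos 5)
Your proof is correct. Note, however, that the paper does not prove this lemma at all: it is quoted verbatim from \cite[Lemma 1.3]{HTT} and used as a black box, so there is no internal proof to compare against. Your splice argument --- extending a graded minimal free resolution of $I$ by the map $F_0\rightarrow R$ and observing that properness of $I$ (i.e.\ $I\subseteq\mm$) preserves minimality, whence $\beta_{i,j}(R/I)=\beta_{i-1,j}(I)$ for $i\geq 1$ and $\beta_{0,0}(R/I)=1$ --- is the standard proof of this fact, and both the projective dimension shift and the regularity shift follow correctly from your bookkeeping. The only point worth flagging is that you need $I\neq 0$ (so that $\mbox{pd}\,(I)$ is defined and $\mbox{reg}\,(I)\geq 1$, which rules out the degree-$0$ term of $R/I$ dominating); you state and use this explicitly, and it is tacit in the lemma since the statement is vacuous for $I=0$, so this is a reading of the hypothesis rather than a gap.
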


\medskip
\begin{Lemma}
\label{lem7}Let $R=k[x_1,\ldots,x_m]$ be the polynomial ring over a field $k$ and  $I$  a proper homogeneous
ideal in $R$. Let $y$ be another variable and $S=R[y]$. Then
\begin{itemize}
\item[(1)]$\mbox{pd}\,(S/IS)=\mbox{pd}\,(R/IR)+1$,
\item[(2)] $\mbox{reg}\, (S/IS)=\mbox{reg}\,(R/IR)$.
\end{itemize}
\end{Lemma}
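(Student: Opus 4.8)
The plan is to derive both formulas from the short exact sequence produced by the new variable $y$, feeding it into Lemmas \ref{lem2} and \ref{lem5}.

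Since $y$ occurs in no generator of $I$, it is a nonzerodivisor on $S/IS$, and multiplication by $y$ gives the short exact sequence of finitely generated graded $S$-modules
\[
0 \to (S/IS)(-1) \xrightarrow{\,\cdot y\,} S/IS \to S/(IS+(y)) \to 0,
\]
the kernel of the surjection $S/IS \to S/(IS+(y))$ being $(IS+(y))/IS \cong (S/IS)(-1)$ via division by $y$ together with the degree shift.

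The heart of the argument is the evaluation of the invariants of the cokernel $C=S/(IS+(y))$ through Lemma \ref{lem2}. Writing $S=R\otimes_k k[y]$ with $I\subseteq R$ and $(y)\subseteq k[y]$ supported on disjoint variable sets, Lemma \ref{lem2} yields
\[
\mbox{pd}\,(S/(IS+(y)))=\mbox{pd}\,(R/IR)+\mbox{pd}\,(k[y]/(y))=\mbox{pd}\,(R/IR)+1
\]
and
\[
\mbox{reg}\,(S/(IS+(y)))=\mbox{reg}\,(R/IR)+\mbox{reg}\,(k[y]/(y))=\mbox{reg}\,(R/IR),
\]
since $\mbox{pd}\,(k[y]/(y))=1$ and $\mbox{reg}\,(k[y]/(y))=0$.

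For the regularity in (2), I would transfer this to $S/IS$ with Lemma \ref{lem5}: in the sequence above the submodule $A=(S/IS)(-1)$ satisfies $\mbox{reg}\,(A)=\mbox{reg}\,(S/IS)+1>\mbox{reg}\,(S/IS)=\mbox{reg}\,(B)$, so Lemma \ref{lem5}(5) gives $\mbox{reg}\,(C)=\mbox{reg}\,(A)-1=\mbox{reg}\,(S/IS)$, and comparing with the value of $\mbox{reg}\,(C)$ above yields $\mbox{reg}\,(S/IS)=\mbox{reg}\,(R/IR)$. For the projective dimension in (1), I would likewise pass from $\mbox{pd}\,(C)=\mbox{pd}\,(R/IR)+1$ to $S/IS$ via Lemma \ref{lem5}(1). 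The main obstacle I anticipate is exactly this last transfer: because the submodule is only a degree shift of the total module $S/IS$, one must combine Lemma \ref{lem5}(1) with the behaviour of $\mbox{pd}$ under the shift and under the nonzerodivisor $y$, and decide which of the three modules attains the top nonvanishing homological degree. This comparison of projective dimensions, and not the computation of $\mbox{pd}\,(C)$ itself, is the delicate step.
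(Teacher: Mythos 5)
Your argument for part (2) is correct, but it takes a genuinely different route from the paper. The paper's entire proof is the one-line observation that $S/IS\cong (R/IR)[y]\cong (R/IR)\otimes_k k[y]$: tensoring a minimal graded free resolution of $R/IR$ over $R$ with $k[y]$ yields a minimal graded free resolution of $S/IS$ over $S$, so all graded Betti numbers --- hence both invariants --- are read off at once. Your route (the multiplication-by-$y$ sequence, Lemma \ref{lem2} applied to $C=S/(IS+(y))$, then Lemma \ref{lem5}(5) with $\mbox{reg}\,(A)=\mbox{reg}\,(B)+1>\mbox{reg}\,(B)$) is longer but sound for the regularity; what it buys is that it stays entirely inside the lemmas already quoted in the paper and avoids any appeal to base change along $R\to S$.

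The obstacle you ran into in part (1), however, is not a delicate step you failed to finish; it cannot be finished, because part (1) as stated (with the ``$+1$'') is false. In your exact sequence $y$ is a nonzerodivisor on $B=S/IS$, so $\mbox{depth}\,(C)=\mbox{depth}\,(B)-1$, and the Auslander--Buchsbaum formula over the single ring $S$ gives $\mbox{pd}\,(C)=\mbox{pd}\,(B)+1$ (equivalently, the mapping cone of $F_\bullet(-1)\stackrel{\cdot y}{\longrightarrow}F_\bullet$ is a minimal free resolution of $C$). Combined with your correct computation $\mbox{pd}\,(C)=\mbox{pd}\,(R/IR)+1$, this forces $\mbox{pd}\,(S/IS)=\mbox{pd}\,(R/IR)$, with no ``$+1$''. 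The paper's own isomorphism says the same thing, since it preserves all Betti numbers; concretely, for $R=k[x]$, $I=(x)$, $S=k[x,y]$ one has $S/IS\cong k[y]$ with $\mbox{pd}_S(S/IS)=1=\mbox{pd}_R(R/I)$, not $2$. So the gap you flagged is a sign that the statement itself needs correcting, and your framework, pushed to its conclusion, disproves (1) rather than proves it. (This error does not propagate in the paper: only part (2) of this lemma is ever invoked, in the proof of Theorem \ref{thm3}; the citation ``Lemmas \ref{lem7} $\sim$ \ref{lem9}'' in the proof of Theorem \ref{thm5} is evidently a misprint for Lemmas \ref{lem8} $\sim$ \ref{lem10}.)
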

\begin{proof}The results follow from
$S/IS\cong (R/IR)[y]\cong  R/IR\otimes_{k} k[y]$.
\end{proof}

\medskip
The following theorem generalizes Lemma 4.4 of \cite{BHT}.
\begin{Theorem}
\label{thm1} Let $u_1,u_2,\ldots,u_{r}$  be a regular sequence of homogeneous polynomials in $S$ with
$\cdeg\,(u_i)=d_i$ for $i=1,\ldots,r$. Let $I=(u_1,u_2,\ldots,u_{r})$ be an ideal. Then
$$\mbox{reg}\,(I^t)=\sum\limits_{i=1}^{r}d_i-(r-1)+(t-1)w \hspace{1cm}  \mbox{for all}\ \ t\geq 1,$$
where $w=max\{d_i \mid  1\leq i\leq r \}$.
\end{Theorem}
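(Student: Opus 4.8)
The plan is to fix the regular sequence (hence $r$ and the degrees $d_i$) and to induct on $t$, reading off the base case from the Koszul complex and running the inductive step through a short exact sequence and Lemma~\ref{lem5}. For the base case $t=1$ I would use that, since $u_1,\dots,u_r$ is a regular sequence of forms of positive degree, the Koszul complex on $u_1,\dots,u_r$ is the minimal graded free resolution of $S/I$, with $p$-th term $\bigoplus_{1\le i_1<\cdots<i_p\le r}S(-(d_{i_1}+\cdots+d_{i_p}))$. Reading off the Betti numbers gives $\mbox{reg}\,(S/I)=\max_{T\subseteq\{1,\dots,r\}}\sum_{i\in T}(d_i-1)=\sum_{i=1}^r(d_i-1)$, and then Lemma~\ref{lem6}(2) yields $\mbox{reg}\,(I)=\sum_{i=1}^r d_i-(r-1)$, which is the asserted value at $t=1$.

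The decisive structural input is that, because $I$ is a complete intersection, its associated graded ring $\mbox{gr}_I(S)=\bigoplus_{s\ge 0}I^s/I^{s+1}$ is the polynomial ring $(S/I)[T_1,\dots,T_r]$ with $T_i$ the class of $u_i$ in $I/I^2$ (see \cite{BH}). Passing to the degree $(t-1)$ component and recording that $\prod_i u_i^{a_i}$ has internal degree $\sum_i a_i d_i$, I obtain a graded isomorphism $I^{t-1}/I^{t}\cong\bigoplus_{a_1+\cdots+a_r=t-1}(S/I)(-\sum_i a_i d_i)$. Since the regularity of a direct sum of shifted copies of $S/I$ equals $\mbox{reg}\,(S/I)$ plus the largest shift, and $\max\{\sum_i a_i d_i : a_i\ge 0,\ \sum_i a_i=t-1\}=(t-1)w$, this gives $\mbox{reg}\,(I^{t-1}/I^{t})=\sum_{i=1}^r d_i-r+(t-1)w$.

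For the inductive step ($t\ge 2$) I would apply Lemma~\ref{lem5} to the short exact sequence $0\to I^{t}\to I^{t-1}\to I^{t-1}/I^{t}\to 0$. The induction hypothesis gives $\mbox{reg}\,(I^{t-1})=\sum_i d_i-(r-1)+(t-2)w$, which lies exactly $w-1$ below $\mbox{reg}\,(I^{t-1}/I^{t})$. When $w\ge 2$ this gap is strictly positive, so a short case analysis with parts (3), (5) and (6) of Lemma~\ref{lem5} (supposing $\mbox{reg}\,(I^t)\le \mbox{reg}\,(I^{t-1})$ leads to a contradiction, after which part (5) applies) forces $\mbox{reg}\,(I^{t})=\mbox{reg}\,(I^{t-1}/I^{t})+1=\sum_i d_i-(r-1)+(t-1)w$, as wanted. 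The case $w=1$ must be handled separately: then every $d_i=1$, the $u_i$ are linearly independent linear forms, so after a linear change of variables $I=(x_1,\dots,x_r)$ and $\mbox{reg}\,(I^{t})=t$, which again matches the formula; this is the statement generalized from \cite{BHT}.

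The main obstacle is the structural identification of $I^{t-1}/I^{t}$ as a direct sum of shifts of $S/I$, together with the correct bookkeeping of those shifts; once the two numbers $\mbox{reg}\,(I^{t-1})$ and $\mbox{reg}\,(I^{t-1}/I^{t})$ are in hand, the application of Lemma~\ref{lem5} is routine. The one genuinely delicate point is the degenerate diagonal case $w=1$, where $\mbox{reg}\,(I^{t-1})=\mbox{reg}\,(I^{t-1}/I^{t})$ and the short exact sequence alone does not determine $\mbox{reg}\,(I^{t})$, forcing one to fall back on the explicit computation for a power of a linear ideal.
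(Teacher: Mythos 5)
Your proposal is correct, but it takes a genuinely different route from the paper's. The paper runs a double induction on $r$ and $t$: writing $J=(u_1,\dots,u_{r-1})$ with $u_r$ of maximal degree $w$, it splits $I^t=J^t+u_rI^{t-1}$, applies Lemma~\ref{lem5} to the short exact sequence $0\to J^t(-w)\to I^{t-1}(-w)\oplus J^t\to I^t\to 0$, and then must treat a delicate borderline sub-case ($t=2$ and $w=w'+1$, where $w'=\max\{d_i\mid 1\leq i\leq r-1\}$) by re-splitting along $K=(u_1,\dots,u_{r-2},u_r)$, because there the two regularities being compared differ by exactly one. You instead import the structure theorem for complete intersections, $\gr_I(S)\cong (S/I)[T_1,\dots,T_r]$, which identifies $I^{t-1}/I^{t}\cong\bigoplus_{a_1+\cdots+a_r=t-1}(S/I)(-\sum_i a_id_i)$ and hence gives the exact value $\mbox{reg}\,(I^{t-1}/I^{t})=\sum_i d_i-r+(t-1)w$; a single induction on $t$ via $0\to I^{t}\to I^{t-1}\to I^{t-1}/I^{t}\to 0$ then closes the argument, and your use of Lemma~\ref{lem5} is sound: if $w\geq 2$ and $\mbox{reg}\,(I^t)\leq\mbox{reg}\,(I^{t-1})$, then part (3) would force $\mbox{reg}\,(I^{t-1})=\mbox{reg}\,(I^{t-1}/I^{t})$, contradicting the positive gap $w-1$, so part (5) applies; you also correctly isolate $w=1$, where the sequence alone is inconclusive, and settle it by the linear resolution of a power of an ideal of independent linear forms (the paper, by contrast, absorbs $w=1$ uniformly into its case $w'=w$). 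What each approach buys: the paper's argument is self-contained, using only the lemmas it states, at the cost of the double induction and the fiddly sub-case; yours is shorter and conceptually cleaner, at the cost of invoking the associated-graded isomorphism from \cite{BH} (together with the Koszul resolution for the base case $t=1$), and it makes transparent why the answer jumps by exactly $w$ with each power.
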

\begin{proof}  We apply induction on  $r$ and $t$. For $r=1$, the statements are clear for  all $t\geq 1$. If $r\geq 2$, it is obvious that $\mbox{reg}\,(I)=\sum\limits_{i=1}^{r}d_i-(r-1)$ by Lemmas \ref{lem2} and \ref{lem6}.  Therefore, we may
suppose that $r,t\geq 2$.
 For convenience, let's assume that $w=d_r$ and
 $J=(u_1,\ldots,u_{r-1})$. Then
 $$I^t=(J+(u_{r}))I^{t-1}=J^t+u_rI^{t-1}.$$
Hence there exists a surjection:
$$\phi :\ I^{t-1}(-w)\oplus J^t \overset{\cdot (u_r,1)}\longrightarrow  I^t.$$
Since $u_r$ is a regular  element of $S/J$, the kernel of $\phi$ is  $u_rJ^t$.
Thus we  have the  short exact sequence
$$0\rightarrow J^t(-w) \rightarrow I^{t-1}(-w)\oplus J^t\stackrel{\cdot (u_r,1)} \longrightarrow I^t \rightarrow 0.$$
By induction hypotheses on $t$ and $r$, we obtain that
\begin{eqnarray*}\mbox{reg}\,(I^{t-1}(-w))&=&\mbox{reg}\,(I^{t-1})+w=\sum\limits_{i=1}^{r}d_i-(r-1)+(t-2)w+w\\
&=&\sum\limits_{i=1}^{r}d_i-(r-1)+(t-1)w\\
\end{eqnarray*}
and
\[\mbox{reg}\,(J^t)=\sum\limits_{i=1}^{r-1}d_i-(r-2)+(t-1)w'
\]
where $w'=\mbox{max}\,\{d_i \mid  1\leq i\leq r-1\}$.

Notice that $w\geq w'$, this implies $\mbox{reg}\,(I^{t-1}(-w))\geq \mbox{reg}\,(J^t)$. It follows that
$$\mbox{reg}\,(I^{t-1}(-w)\oplus J^t)=\mbox{max}\,\{\mbox{reg}\,(I^{t-1}(-w)),\mbox{reg}\,(J^t)\}=\sum\limits_{i=1}^{r}d_i-(r-1)+(t-1)w.$$
Using the fact  $u_rI^{t-1}\cap J^t=u_rJ^t$,  Lemma
\ref{lem3} (2) and  induction hypothesis on $r$, we have
$$\mbox{reg}\,(u_rI^{t-1}\cap J^t)=\mbox{reg}\,(u_rJ^t)=\sum\limits_{i=1}^{r}d_i-(r-2)+(t-1)w'.$$
Let $\alpha=\mbox{reg}\,(I^{t-1}(-w)\oplus J^t)$ and  $\beta=\mbox{reg}\,(u_rI^{t-1}\cap J^t)$, then
$$\beta-\alpha=(t-1)(w'-w)+1.$$
If $w'=w$, then $\beta-\alpha=1$. By Lemma \ref{lem5} (5), we have
$$\mbox{reg}\,(I^t)=\beta-1=\alpha=\sum\limits_{i=1}^{r}d_i-(r-1)+(t-1)w. $$
If $w'\leq w-2$, or $w'=w-1$ and $t\geq 3$, then $\beta<\alpha$. Thus by Lemma \ref{lem5} (6), we have
$$\mbox{reg}\,(I^t)=\alpha=\sum\limits_{i=1}^{r}d_i-(r-1)+(t-1)w. $$
Therefore it is enough to prove this conclusion only for $t=2$ and $w=w'+1$. In this case, $w\geq 2$.
Set $d_{r-1}=w'$ and  $K=(u_1,u_2,\ldots,u_{r-2},u_r)$, we have $I^2=(K+(u_{r-1}))I=u_{r-1}I+K^2$. Thus there exists the  exact sequence
$$0\rightarrow u_{r-1}I\cap K^2 \rightarrow I(-w')\oplus K^2 \stackrel{\cdot (u_{r-1},1)} \longrightarrow I^2 \rightarrow 0.$$
  Using the fact  $w=w'+1$, $u_{r-1}I\cap K^2 =u_{r-1}K^2$ and induction hypothesis, we have
$$\mbox{reg}\,(I(-w'))=w'+\sum\limits_{i=1}^{r}d_i-(r-1)=\sum\limits_{i=1}^{r}d_i-(r-1)+w-1,$$
$$\mbox{reg}\,(K^2)=\sum\limits_{i=1}^{r-2}d_i+d_{r}-(r-2)+w=\sum\limits_{i=1}^{r}d_i-(r-1)+2,$$
and
$$\mbox{reg}\,(u_{r-1}K^2)=w'+\sum\limits_{i=1}^{r}d_i-(r-1)+2=\sum\limits_{i=1}^{r}d_i-(r-1)+w+1.$$
It follows that $\mbox{reg}\,(u_{r-1}K^2)>\mbox{max}\,\{\mbox{reg}(I(-w')),\mbox{reg}\,(K^2)\}$ because of $w\geq 2$, thus the result follows from Lemma \ref{lem5} (5).
\end{proof}

\medskip
 Let  $D=(V(D), E(D),w)$ be a vertex-weighted oriented graph. For $T\subset V(D)$, we define
the {\em induced vertex-weighted  subgraph} $H=(V(H), E(H),w)$ of $D$  to be the vertex-weighted oriented graph
such that $V(H)=T$, $uv\in E(H)$  if and only if $uv\in E(D)$  and   for any $u,v\in V(H)$. For any $u\in V(H)$ and $u$ is not a source in $H$, its weight in $H$ equals to the weight of $u$ in $D$, otherwise, its weight in $H$ equals to $1$.
For  $P\subset V(D)$, we  denote
$D\setminus P$ the induced subgraph of $D$ obtained by removing the vertices in $P$ and the
edges incident to these vertices. If $P=\{x\}$ consists of a single element, then we write $D\setminus x$ for $D\setminus \{x\}$.
For $W\subseteq E(D)$, we define $D\setminus W$ to be the subgraph of $D$ with all edges  in $W$  deleted (but its vertices remained).  When $W=\{e\}$ consists of a single edge, we write $D\setminus e$ instead of $D\setminus \{e\}$.

Let $x\in V(D)$, then we call  $N_D^{+}(x)=\{y:(x,y)\in E(D)\}$ and $N_D^{-}(x)=\{y:(y,x)\in E(D)\}$ to be  the out-neighbourhood and  in-neighbourhood of $x$, respectively. The neighbourhood of $x$ is the set $N_D(x)=N_D^{+}(x)\cup N_D^{-}(x)$.

\medskip
The following lemmas are needed to facilitate calculating the projective dimension and the regularity
of powers through induction on the power.
\begin{Lemma}
\label{lem8}  Let $D=(V(D),E(D),w)$ be a vertex-weighted oriented graph, let $z$ be a leaf with $N_D^{-}(z)=\{y\}$. Then
$$(I(D)^{t},z^{w_z})=(I(D\setminus z)^{t},z^{w_z})\ \ \text{for any}\ \ t\geq 1.$$
\end{Lemma}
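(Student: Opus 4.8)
The plan is to reduce the statement to a short binomial-expansion argument, after first pinning down how $I(D)$ decomposes relative to the leaf $z$. Since $z$ is a leaf, it has a single incident edge, and the hypothesis $N_D^{-}(z)=\{y\}$ says this edge is the arrow $yz$ oriented from $y$ into $z$ (there is no arrow leaving $z$). Consequently the only generator of $I(D)$ in which the variable $z$ occurs is the monomial $M:=yz^{w_z}$ attached to this edge, and passing to $D\setminus z$ removes exactly this one generator. Writing $J:=I(D\setminus z)$, I would first record the clean decomposition $I(D)=J+(M)$, noting that no minimal generator of $J$ is divisible by $z$.

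Next I would expand the $t$-th power. From $I(D)=J+(M)$ one gets
$$I(D)^{t}=\sum_{k=0}^{t}M^{k}J^{t-k}=J^{t}+\sum_{k=1}^{t}M^{k}J^{t-k}.$$
The key observation is that $z^{w_z}$ divides $M$, hence $z^{w_z}$ divides $M^{k}$ for every $k\geq 1$; therefore each summand $M^{k}J^{t-k}$ with $k\geq 1$ already lies in the ideal $(z^{w_z})$. So once we adjoin $z^{w_z}$, all of these cross terms are absorbed and only the $k=0$ term survives, yielding
$$(I(D)^{t},z^{w_z})=(J^{t},z^{w_z})=(I(D\setminus z)^{t},z^{w_z}),$$
which is the assertion. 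Equivalently, I would verify the two inclusions by hand: ``$\supseteq$'' is immediate since $J\subseteq I(D)$ gives $J^{t}\subseteq I(D)^{t}$, while ``$\subseteq$'' holds because every generator of $I(D)^{t}$ either sits in $J^{t}$ or is a product involving $M$, hence is divisible by $z^{w_z}$.

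There is no serious obstacle here; once the decomposition is in place the computation is essentially bookkeeping. The only step that genuinely uses the hypotheses is the initial identity $I(D)=J+(M)$: I must invoke that $z$ is a leaf (so it has just one incident edge) together with $N_D^{-}(z)=\{y\}$ (so that edge is oriented \emph{into} $z$, forcing its generator to be $yz^{w_z}$ and thus to carry the full power $z^{w_z}$). Were $z$ to have an out-going arrow or higher degree, the variable $z$ could occur in several generators or occur with exponent $1$, and the cancellation modulo $z^{w_z}$ would break down. I would therefore isolate and justify this decomposition carefully before carrying out the power expansion.
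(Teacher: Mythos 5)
Your proof is correct and follows essentially the same route as the paper: both arguments rest on the observation that $yz^{w_z}$ is the only generator of $I(D)$ involving $z$, so every generator of $I(D)^{t}$ outside $I(D\setminus z)^{t}$ is divisible by $z^{w_z}$ and is absorbed after adjoining $z^{w_z}$. Your binomial expansion $(I(D\setminus z)+(yz^{w_z}))^{t}=\sum_{k=0}^{t}(yz^{w_z})^{k}I(D\setminus z)^{t-k}$ is just a tidier packaging of the paper's generator-by-generator check.
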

\begin{proof} It's clear that  $(I(D\setminus z)^{t},z^{w_z})\subseteq (I(D)^{t},z^{w_z})$.
If any monomial $f\in \mathcal{G}(I(D)^{t})\setminus \mathcal{G}(I(D\setminus z)^{t})$, then $z$  divides  $f$. It follows that
 $yz^{w_z}$  also divides   $f$ because of $N_D^{-}(z)=\{y\}$.  This implies  $f\in (z^{w_z})$.
\end{proof}

\medskip
\begin{Lemma}
\label{lem9} Let $D=(V(D),E(D),w)$ be a vertex-weighted oriented graph as in Lemma
\ref{lem8} such that $z$ is a leaf with $N_D^{-}(z)=\{y\}$. Then
$$(I(D)^{t}:yz^{w_z})=I(D)^{t-1}\ \ \text{for any}\ \ t\geq 2.$$
\end{Lemma}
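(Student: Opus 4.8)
The plan is to prove the equality $(I(D)^{t}:yz^{w_z})=I(D)^{t-1}$ by establishing containment in both directions, since these are equalities of monomial ideals it suffices to compare minimal generators, or more robustly, to verify the ideal membership statement that a monomial $f$ lies in the colon ideal if and only if it lies in $I(D)^{t-1}$. The key structural fact I would exploit is that $z$ is a leaf with unique in-neighbour $y$, so the only edge of $D$ incident to $z$ is the arrow $yz$, and hence the only generator of $I(D)$ involving the variable $z$ is the monomial $yz^{w_z}$.

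For the containment $I(D)^{t-1}\subseteq (I(D)^{t}:yz^{w_z})$, the argument is immediate: if $g\in I(D)^{t-1}$, then $(yz^{w_z})\cdot g\in I(D)^{t}$ because $yz^{w_z}\in I(D)$ is itself a generator (the edge $yz\in E(D)$), so $g$ belongs to the colon ideal by definition. The harder direction, which I expect to be the main obstacle, is the reverse containment $(I(D)^{t}:yz^{w_z})\subseteq I(D)^{t-1}$. Here I would take a monomial $f$ with $f\cdot yz^{w_z}\in I(D)^{t}$ and show $f\in I(D)^{t-1}$. Writing $f\cdot yz^{w_z}$ as a product of $t$ generators of $I(D)$ times some monomial, I must track the variable $z$: since $yz^{w_z}$ is the \emph{only} generator of $I(D)$ divisible by $z$, and the total $z$-degree of $f\cdot yz^{w_z}$ is $w_z$ plus the $z$-degree of $f$, I need to count how many of the $t$ chosen generators equal $yz^{w_z}$.

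The crux is a careful degree-counting argument in the variable $z$. Suppose $f\cdot yz^{w_z} = u_{1}\cdots u_{t}\cdot h$ where each $u_{i}\in \mathcal{G}(I(D))$ and $h$ is a monomial. Each factor $u_{i}$ that is divisible by $z$ must equal $yz^{w_z}$ and contributes exactly $z$-degree $w_z$; say $k$ of the factors equal $yz^{w_z}$, and $h$ contributes $z$-degree $c\geq 0$. Then the total $z$-degree is $k\,w_z + c = \deg_z(f) + w_z$. The goal is to peel off one copy of the edge-generator: I want to rewrite the factorization so that one factor $yz^{w_z}$ can be cancelled against the $yz^{w_z}$ on the left, leaving an expression exhibiting $f$ as a product of $t-1$ generators times a monomial. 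The delicate point is when $k\geq 1$ we can directly cancel, but one must also handle the case where the $z$-power in $f$ is absorbed by $h$ rather than by a generator; here the hypothesis $N_D^{-}(z)=\{y\}$ forces the accompanying $y$-factor to be present, so after cancelling one $yz^{w_z}$ the remaining product still divides $f$ appropriately. I would formalize this by showing that from any expression certifying $f\cdot yz^{w_z}\in I(D)^t$ one can extract a subproduct equal to $yz^{w_z}$ whose removal certifies $f\in I(D)^{t-1}$, which completes the proof. The subtlety to watch is ensuring the leaf condition genuinely prevents any \emph{other} generator from supplying the variable $z$, so that the cancellation is forced and well-defined rather than merely possible.
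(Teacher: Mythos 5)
Your proposal is correct and follows essentially the same route as the paper's own proof: write $f\,yz^{w_z}=e_{i1}\cdots e_{it}h$ with each $e_{ij}\in\mathcal{G}(I(D))$, use the leaf hypothesis $N_D^{-}(z)=\{y\}$ to force any factor divisible by $z^{w_z}$ to equal $yz^{w_z}$, and split into the two cases according to whether such a factor occurs among the $e_{ij}$ (cancel it) or the $z$-power is absorbed by $h$. The only remark is that in the second case both you and the paper leave implicit the final bookkeeping step of cancelling the stray factor $y$ against either $h$ or one of the generators (either of which lands $f$ in $I(D)^{t-1}$), which is routine.
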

\begin{proof} If any monomial $f\in \mathcal{G}(I(D)^{t}:yz^{w_z})$, then $fyz^{w_z}\in I(D)^{t}$. We can write
$fyz^{w_z}=e_{i1}e_{i2}\ldots e_{it}h$  for some monomial $h$, where $e_{ij}=x_{ij}y_{ij}^{w(y_{ij})}$ such that $x_{ij}y_{ij}\in E(D)$.
If there exists some  $j\in \{1,\ldots,t\}$ such that $z^{w_z}$  divides  $e_{ij}$, then $e_{ij}=yz^{w_z}$
 because of $N_D^{-}(z)=\{y\}$.
This implies that $f\in I(D)^{t-1}$. If $z^{w_z}$ does not divide   $e_{ij}$ for all $j\in \{1,\ldots,t\}$, then  $z^{w_z}$  divides  $h$. Thus
$f\in I(D)^{t-1}$.
\end{proof}

\medskip
\begin{Lemma}
\label{lem10}  Let $D=(V(D),E(D),w)$ be a vertex-weighted oriented graph as in Lemma
\ref{lem8} such that $z$ is a leaf with $N_D^{-}(z)=\{y\}$. Then
$$((I(D)^{t}:z^{w_z}),y)=((I(D\setminus y)^{t}:z^{w_z}),y)=(I(D\setminus y)^{t},y)\ \ \text{for any}\ \ t\geq 2.$$
\end{Lemma}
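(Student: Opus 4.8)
The plan is to establish the two equalities separately, both by exploiting that the only generator of $I(D)$ in which the variable $z$ occurs is $yz^{w_z}$. Indeed, since $z$ is a leaf with $N_D^{-}(z)=\{y\}$, the unique edge incident to $z$ is $yz$, so $yz^{w_z}$ is the only element of $\mathcal{G}(I(D))$ divisible by $z$; moreover $z\notin \supp(I(D\setminus y))$ because deleting $y$ removes this edge, and then $z$ becomes isolated. I would also record the (routine) fact that $\mathcal{G}(I(D\setminus y))$ consists exactly of those generators of $I(D)$ not divisible by $y$: an edge $x_ix_j$ avoiding $y$ survives in $D\setminus y$ with the same ending vertex $x_j$, which is not a source there, so its weight---and hence the generator $x_ix_j^{w_j}$---is unchanged. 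In particular $I(D\setminus y)\subseteq I(D)$ and so $I(D\setminus y)^{t}\subseteq I(D)^{t}$.

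For the right-hand equality, since $z\notin \supp(I(D\setminus y))$, no generator of the monomial ideal $I(D\setminus y)^{t}$ is divisible by $z$; the standard colon description for monomial ideals then gives $I(D\setminus y)^{t}:z^{w_z}=I(D\setminus y)^{t}$, and adjoining $y$ yields $((I(D\setminus y)^{t}:z^{w_z}),y)=(I(D\setminus y)^{t},y)$.

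For the left-hand equality I would prove $((I(D)^{t}:z^{w_z}),y)=(I(D\setminus y)^{t},y)$ by double inclusion. The inclusion $\supseteq$ is immediate from $I(D\setminus y)^{t}\subseteq I(D)^{t}\subseteq (I(D)^{t}:z^{w_z})$ together with $y\in(\,\cdot\,,y)$. For $\subseteq$ it suffices to take a monomial $f\in\mathcal{G}(I(D)^{t}:z^{w_z})$ with $y\nmid f$ and show $f\in I(D\setminus y)^{t}$ (if $y\mid f$ then $f\in(y)$ and there is nothing to prove). Write $fz^{w_z}=e_{i_1}\cdots e_{i_t}h$ with each $e_{i_j}\in\mathcal{G}(I(D))$ and $h$ a monomial. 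Since $y\nmid f$ and $y\nmid z^{w_z}$, the variable $y$ does not divide the left-hand side, hence divides none of the $e_{i_j}$; by the first paragraph each $e_{i_j}$ therefore lies in $\mathcal{G}(I(D\setminus y))$ and in particular is coprime to $z$. Consequently $e_{i_1}\cdots e_{i_t}$ is coprime to $z^{w_z}$ yet divides $fz^{w_z}$, so it already divides $f$; thus $f\in(e_{i_1}\cdots e_{i_t})\subseteq I(D\setminus y)^{t}$, as wanted.

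The only delicate points are bookkeeping: confirming that deleting $y$ preserves the remaining edge generators verbatim (the weight issue noted in the first paragraph) and the coprimality step that lets one cancel $z^{w_z}$ after all surviving factors are seen to avoid $z$. I do not expect either to present a genuine obstacle, so the argument reduces to a careful tracking of which generators can carry the variables $y$ and $z$.
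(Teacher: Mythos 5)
Your proof is correct and takes essentially the same approach as the paper's: both arguments work at the level of monomial generators, using that $y\nmid f$ forces every edge generator dividing $fz^{w_z}$ to avoid $y$ and hence to lie in $\mathcal{G}(I(D\setminus y))$, and that $yz^{w_z}$ is the only generator of $I(D)$ involving $z$ (so the colon by $z^{w_z}$ acts trivially on $I(D\setminus y)^t$). If anything, you are more careful than the paper, which leaves the weight-preservation bookkeeping, the reverse inclusion, and the final cancellation step implicit.
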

\begin{proof}  It's obvious that $((I(D\setminus y)^{t}:z^{w_z}),y)=(I(D\setminus y)^{t},y)$ by similar arguments as the proof of Lemma
\ref{lem9}.
Now assume that  $f\in \mathcal{G}((I(D)^{t}:z^{w_z}),y)$ and $y$ does not divide  $f$, then
$f\in (I(D)^{t}:z^{w_z})$. It follows that
$fz^{w_z}\in I(D\setminus y)^{t}$. This means that
 $f\in (I(D\setminus y)^{t}:z^{w_z})$. Hence $f\in ((I(D\setminus y)^{t}:z^{w_z}),y)$.
\end{proof}

\medskip
\begin{Lemma}\label{lem11}
Let $n\geq 3$ be an integer and $P_n$  a vertex-weighted oriented line graph with edge set $E(P_n)=\{x_1x_2,x_2x_3,\ldots,x_{n-1}x_n\}$, its edge ideal  $I(P_n)=(x_1x_2^{w_2},x_2x_3^{w_3},\\
\ldots,x_{n-1}x_n^{w_n})$ such that  $w_i\geq 2$  for any $2\leq i\leq n-1$. Let $I_n$ be an ideal with the generator set $\mathcal{G}(I_n)=\mathcal{G}(I(P_n)^2)\setminus \{x_1^2x_2^{2w_2},x_2^2x_3^{2w_3},\ldots,x_{n-1}^2x_n^{2w_n}\}$.
Then
$$\mbox{reg}\,(I_n)\leq \sum\limits_{i=1}^{n}w_i-(n-1)+1+(w+1)$$
where $w=\mbox{max}\,\{w_i\mid 1\leq i \leq n\}$.
\end{Lemma}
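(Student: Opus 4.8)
The plan is to induct on $n$, splitting $I_n$ along the variable $x_1$, which divides only the single generator $u_1:=x_1x_2^{w_2}$. Writing $u_i=x_ix_{i+1}^{w_{i+1}}$, so that $\mathcal{G}(I_n)=\{u_iu_j\mid 1\le i<j\le n-1\}$, I separate the generators divisible by $x_1$ from the rest:
$$J=(u_1u_j\mid 2\le j\le n-1)=u_1\,I(P'),\qquad K=(u_iu_j\mid 2\le i<j\le n-1),$$
where $P'$ is the line graph on $x_2,\dots,x_n$ and $I(P')=(u_2,\dots,u_{n-1})$. Crucially, $K$ is exactly the ideal ``$I_{n-1}$'' attached to $P'$, so the induction hypothesis applies to it. From the short exact sequence $0\to J\cap K\to J\oplus K\to I_n\to 0$ and the standard regularity estimate for short exact sequences (Lemma \ref{lem5}) one gets the unconditional bound $\reg(I_n)\le\max\{\reg(J),\ \reg(K),\ \reg(J\cap K)-1\}$. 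It then suffices to bound the first two terms by $T:=\sum_{i=1}^n w_i-(n-1)+1+(w+1)$ and the third by $T+1$. The base case $n=3$ is immediate, since $I_3=(u_1u_2)$ is principal of degree $w_2+w_3+2\le T$ (as $w\ge1$).

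I would next dispatch the two easy terms. In $P'$ the vertex $x_2$ is a \emph{source}, hence has weight $1$ there (by the source convention), so the induction hypothesis gives $\reg(K)\le T-1$, the slack of $w_2-1\ge1$ arising precisely from replacing $w_2$ by $1$. For $J$, multiplication by the nonzerodivisor $u_1$ gives $u_1I(P')\cong I(P')(-\deg u_1)$, so (generalizing Lemma \ref{lem3}(2)) $\reg(J)=\reg(I(P'))+1+w_2$; combined with the edge--ideal regularity formula $\reg(I(P'))=\sum_{v\in P'}w(v)-|E(P')|+1$ for rooted forests from \cite{Z4}, this yields $\reg(J)=T-(w-1)\le T$, with gap $w-1\ge1$.

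The crux is $\reg(J\cap K)$. Since $J\cap K\subseteq J=u_1I(P')$, every element of $J\cap K$ is divisible by $u_1$, so $J\cap K=u_1L$ with $L=(J\cap K):u_1=I(P')\cap(K:x_2^{w_2})$. Because each $u_2u_c$ carries $x_2$ only to the first power while $w_2\ge2$, a direct colon computation identifies
$$L=x_3^{w_3}\,I(P'')+I''_{n-2},$$
where $P''$ is the line graph on $x_3,\dots,x_n$, $I(P'')=(u_3,\dots,u_{n-1})$, and $I''_{n-2}=(u_bu_c\mid 3\le b<c\le n-1)$ is the off--diagonal square attached to $P''$. Since $\supp(u_1)=\{x_1,x_2\}$ is disjoint from $\supp(L)\subseteq\{x_3,\dots,x_n\}$, Lemma \ref{lem3}(2) gives $\reg(J\cap K)=(1+w_2)+\reg(L)$, and it remains to prove $\reg(L)\le T-w_2$.

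I would attack $\reg(L)$ by a second Betti splitting, $L=A+B$ with $A=x_3^{w_3}I(P'')$ and $B=I''_{n-2}$, again reducing via Lemma \ref{lem5} to $\reg(A)$, $\reg(B)$, and $\reg(A\cap B)$: here $\reg(A)=w_3+\reg(I(P''))$ by the degree--shift isomorphism, $\reg(B)$ is controlled by the induction hypothesis on $P''$, and $A\cap B$ is analyzed exactly as $J\cap K$ above. The main obstacle is precisely this recurring intersection term: the auxiliary ideal $L$ is no longer of the pure off--diagonal--square shape of the lemma, so to keep the induction self--sustaining I expect to have to either strengthen the inductive statement to cover mixed ideals of the form $x_i^{w_i}I(P)+(\text{off--diagonal square})$, or verify directly that the nested splittings terminate, with the arithmetic slack (from $w\ge w_i$ and $w_i\ge2$ at internal vertices) always landing the bound at or below $T$. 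Checking that this bookkeeping closes—rather than the individual regularity computations, which are routine applications of Lemmas \ref{lem3} and \ref{lem5}—is where the real difficulty lies.
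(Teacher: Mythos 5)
Your setup is sound as far as it goes: with $u_i=x_ix_{i+1}^{w_{i+1}}$ and $T:=\sum_{i=1}^{n}w_i-(n-1)+1+(w+1)$, the splitting $I_n=J+K$ with $J=u_1I(P')$ and $K$ the off-diagonal-square ideal of the path $P'$ on $x_2,\dots,x_n$ is correct, the bounds $\reg(J)\le T-(w-1)$ and $\reg(K)\le T-(w_2-1)$ do follow from the induction hypothesis, the source convention for $x_2$ in $P'$, and the known regularity formula for edge ideals of weighted paths; and your colon computation $J\cap K=u_1L$ with $L=x_3^{w_3}I(P'')+I''_{n-2}$ is accurate (you are also right that $\reg(u_1M)=\reg(M)+\deg u_1$ needs no support-disjointness, since $u_1M\cong M(-\deg u_1)$). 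But the proof has a genuine gap exactly where you admit "the real difficulty lies": the required estimate $\reg(L)\le T-w_2$, equivalently $\reg(J\cap K)\le T+1$, is never proved. The ideal $L$ has a mixed shape, $x_3^{w_3}\cdot(\text{edge ideal})+(\text{off-diagonal square})$, which is not covered by the statement you are inducting on, and your proposed second splitting $L=A+B$ regenerates an intersection $A\cap B$ of the same mixed shape one vertex further along the path, so the recursion never re-enters the inductive family. Without formulating and proving a strengthened inductive statement for that whole mixed family (with its own weight arithmetic), the argument does not close; this unproved recursion is the actual mathematical content of the lemma. Note also that you cannot shortcut this via Lemma \ref{lem1}: $J=u_1I(P')$ is not generated in a single degree, hence has no linear resolution, so the Betti-splitting criterion does not apply and you are left with only the one-sided exact-sequence estimate, which makes controlling $\reg(J\cap K)$ unavoidable.

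The paper avoids precisely this obstruction by inducting from the other end of the path with colon ideals instead of a splitting. It uses the two exact sequences built from multiplication by $x_n^{w_n}$ and then by $x_{n-1}$, together with the identifications $(I_n,x_n^{w_n})=(I_{n-1},x_n^{w_n})$, $((I_n:x_n^{w_n}),x_{n-1})=(I_{n-2},x_{n-1})$ and $(I_n:x_{n-1}x_n^{w_n})=I(P_{n-1})$ (the analogues of Lemmas \ref{lem8}--\ref{lem10}). Every ideal that appears is again some $I_m$ or some $I(P_m)$, so the induction hypothesis, \cite[Theorem 3.5]{Z3}, Lemma \ref{lem2}, and Lemma \ref{lem5}(2) close the induction in two steps with no intersection ideal to control. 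If you want to rescue your route, the missing work is to state and prove a regularity bound for the family $x_i^{w_i}I(P)+(\text{off-diagonal square})$; as written, your proposal does not establish the lemma.
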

\begin{proof}
We apply induction on $n$. The case $n=3$ is clear.
Assume that $n\geq 4$. Consider the short exact sequences
 $$0\longrightarrow \frac{S}{(I_n:x_{n}^{w_n})}(-w_n)\stackrel{ \cdot x_{n}^{w_n}} \longrightarrow \frac{S}{I_n}\longrightarrow \frac{S}{(I_n,x_{n}^{w_n})}\longrightarrow 0 \eqno(1)$$
and
$$0\longrightarrow \frac{S}{((I_n:x_{n}^{w_n}):x_{n-1})}(-1)\stackrel{\cdot x_{n-1}} \longrightarrow \frac{S}{(I_n:x_{n}^{w_n})}\longrightarrow \frac{S}{((I_n:x_{n}^{w_n}),x_{n-1})} \longrightarrow 0.\eqno(2)$$
Following the same arguments as Lemmas \ref{lem8} $\sim$ \ref{lem10}, we have  $(I_n,x_n^{w_n})=(I_{n-1},x_n^{w_n})$, $((I_n:x_{n}^{w_n}),x_{n-1})=(I_{n-2},x_{n-1})$ and $(I_n:x_{n-1}x_{n}^{w_n})=I(P_{n-1}).$

By induction hypothesis on $n$, Lemma \ref{lem2} (1), Lemma \ref{lem6} and  \cite[Theorem 3.5]{Z3}, we obtain
\begin{eqnarray*}
\mbox{reg}\,((I_n,x_{n}^{w_n}))\!\!&=&\!\!\mbox{reg}\,((I_{n-1},x_n^{w_n}))=\mbox{reg}\,(I_{n-1})+\mbox{reg}\,((x_n^{w_n}))-1\\
&=&\mbox{reg}\,(I_{n-1})+w_n-1\\
&\leq&\sum\limits_{i=1}^{n-1}w_i-(n-2)+1+w'+1+w_n-1\\
&\leq&\sum\limits_{i=1}^{n}w_i-(n-1)+1+w+1,\hspace{5.8cm} (3)
\end{eqnarray*}
where the last inequality holds because of $w'\leq w$, where  $w'=\mbox{max}\,\{w_i\mid 1\leq i \leq n-1\}$,
$$
\mbox{reg}\,(I_n:x_{n-1}x_{n}^{w_n})=\mbox{reg}\,(I(P_{n-1}))=\sum\limits_{i=1}^{n-1}w_i-(n-2)+1, \eqno(4)
$$
and
\begin{eqnarray*}
\mbox{reg}\,((I_n:x_{n}^{w_n}),x_{n-1})&=&\mbox{reg}\,(I_{n-2},x_{n-1})=\mbox{reg}\,(I_{n-2})\\
&\leq&\sum\limits_{i=1}^{n-2}\!w_i\!-(n-3)+1+w''+1\\
&\leq&\sum\limits_{i=1}^{n-1}\!w_i\!-(n-1)+1+w+1 \hspace{4.7cm} (5)
\end{eqnarray*}
where the last inequality holds because of $w_{n-1}\geq 2$ and $w''\leq w$, here  $w''=\mbox{max}\,\{w_i\mid 1\leq i \leq n-2\}$.

By  Lemma \ref{lem6} (2) and using Lemma \ref{lem5} (2) on the short exact sequence (2) and (4), (5), we have
\[
\mbox{reg}\,(I_n:x_{n}^{w_n})\leq \sum\limits_{i=1}^{n-1}w_i-(n-1)+1+w+1.\eqno (6)
\]
Again by using Lemma \ref{lem5} (2) on the short exact sequence (1),
and (3), (6), we have
\[
\mbox{reg}\,(I_n)\leq\sum\limits_{i=1}^{n}w_i-(n-1)+1+w+1.
\]
\end{proof}

\medskip
Now we are ready to present the main result of this section
\begin{Theorem}\label{thm3}
Let $n\geq 2$ be an integer and $P_n$  a vertex-weighted oriented line graph, let $I(P_n)=(x_1x_2^{w_2},x_2x_3^{w_3},\ldots,x_{n-1}x_n^{w_n})$ the  edge ideal of $P_n$ with $w_i\geq 2$  for any $2\leq i\leq n-1$. Then
$$\mbox{reg}\,(I(P_n)^{t})=\sum\limits_{i=1}^{n}w_i-|E(P_n)|+1+(t-1)(w+1)\ \ \  \mbox{for all}\ \ t\geq 1,$$
where $w=\mbox{max}\,\{w_i\mid 1\leq i\leq n\}$.
\end{Theorem}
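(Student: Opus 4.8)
Throughout write $R:=\sum_{i=1}^{n}w_i-(n-1)+1+(t-1)(w+1)$ for the claimed value, and recall $|E(P_n)|=n-1$. The plan is to argue by a double induction, the outer one on $t$ and the inner one on $n$, peeling off the sink-leaf $z=x_n$, which satisfies $N_{P_n}^{-}(x_n)=\{x_{n-1}\}$, so that Lemmas \ref{lem8}--\ref{lem10} apply with $y=x_{n-1}$. The base cases are immediate: for $n=2$ the ideal $I(P_2)^t=(x_1^tx_2^{tw_2})$ is principal of degree $t(1+w_2)=t(w+1)=R$, and for $t=1$ with arbitrary $n$ we have $\mbox{reg}\,(I(P_n))=\sum_{i=1}^{n}w_i-(n-1)+1=R$ by \cite[Theorem 3.5]{Z3}. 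For the inductive step with $n\geq 3$ and $t\geq 2$ I would use the two short exact sequences
$$0\longrightarrow \frac{S}{(I(P_n)^t:x_n^{w_n})}(-w_n)\stackrel{\cdot x_n^{w_n}}{\longrightarrow} \frac{S}{I(P_n)^t}\longrightarrow \frac{S}{(I(P_n)^t,x_n^{w_n})}\longrightarrow 0$$
and
$$0\longrightarrow \frac{S}{(I(P_n)^t:x_{n-1}x_n^{w_n})}(-1)\stackrel{\cdot x_{n-1}}{\longrightarrow} \frac{S}{(I(P_n)^t:x_n^{w_n})}\longrightarrow \frac{S}{((I(P_n)^t:x_n^{w_n}),x_{n-1})}\longrightarrow 0 .$$

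By Lemmas \ref{lem8}--\ref{lem10} the three quotient/colon ideals simplify to $(I(P_n)^t,x_n^{w_n})=(I(P_{n-1})^t,x_n^{w_n})$, to $((I(P_n)^t:x_n^{w_n}),x_{n-1})=(I(P_{n-2})^t,x_{n-1})$, and to $(I(P_n)^t:x_{n-1}x_n^{w_n})=I(P_n)^{t-1}$, the last because $P_n\setminus x_{n-1}=P_{n-2}\sqcup\{x_n\}$ has edge ideal $I(P_{n-2})$. Next I would compute the regularities of the three modules in each sequence using the inner induction hypothesis on $I(P_{n-1})^t,I(P_{n-2})^t$, the outer hypothesis on $I(P_n)^{t-1}$, together with Lemma \ref{lem2}(1) (disjoint variable sets), Lemma \ref{lem6} and Lemma \ref{lem7}. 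This gives $\mbox{reg}\,(S/(I(P_n)^t,x_n^{w_n}))=\sum_{i=1}^{n}w_i-(n-1)+(t-1)(w'+1)$ with $w'=\mbox{max}\{w_i\mid 1\leq i\leq n-1\}$, $\mbox{reg}\,(S/(I(P_n)^t:x_{n-1}x_n^{w_n})(-1))=R-w-1$, and $\mbox{reg}\,(S/(I(P_{n-2})^t,x_{n-1}))=\sum_{i=1}^{n-2}w_i-(n-3)+(t-1)(w''+1)$ with $w''=\mbox{max}\{w_i\mid 1\leq i\leq n-2\}$.

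Feeding these values into Lemma \ref{lem5}(2) along both sequences yields the upper bound $\mbox{reg}\,(I(P_n)^t)\leq R$ with no case distinction; this is the power-analogue of Lemma \ref{lem11}, and it is exactly here that the hypothesis $w_{n-1}\geq 2$ is used, to keep the $(I(P_{n-2})^t,x_{n-1})$-term strictly below $R$. For the matching lower bound I would first dispose of the generic situation: when the maximum weight is attained in the interior, so that $w>w_n$, the regularities of the sub- and quotient modules in the two sequences differ by at least $2$, and the sharp parts Lemma \ref{lem5}(3),(4),(6) pin down $\mbox{reg}\,(I(P_n)^t)=R$ exactly.

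The real obstacle is the critical case $w=w_n$, where the maximum weight sits at the very leaf being removed: there the relevant regularities can differ by exactly $1$, Lemma \ref{lem5} becomes inconclusive, and the upper bound need not be forced for formal reasons. Mirroring the endgame of the proof of Theorem \ref{thm1}, I expect to reduce this case to $t=2$ with $w=w'+1$, and to resolve it with one further short exact sequence isolating the high-weight vertex $x_{n-1}$ together with the sharp estimate of Lemma \ref{lem11} on the ``squares-removed'' ideal $I_n$. The crux is to exhibit a first syzygy whose least common multiple has degree $R+1$ and to show it persists in the minimal resolution; concretely, the syzygy between $x_{n-2}x_{n-1}^{w_{n-1}}\cdot x_{n-1}x_n^{w_n}$ and $(x_{n-1}x_n^{w_n})^2$ has $\mbox{lcm}$ of degree $R+1$, contributing a nonzero $\beta_{1,R+1}$ and hence $\mbox{reg}\,(I(P_n)^t)\geq (R+1)-1=R$. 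Verifying that this syzygy is not cancelled in the minimal resolution is the step I expect to demand the most care.
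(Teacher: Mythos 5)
Your skeleton for $t\geq 3$ coincides with the paper's: same two short exact sequences, same reductions via Lemmas \ref{lem8}--\ref{lem10}, same three regularity formulas, and your observation that Lemma \ref{lem5}(2) yields the upper bound $\mbox{reg}\,(I(P_n)^t)\leq R$ uniformly is correct (the paper reaches the same inequalities). You have also correctly isolated where the exact-sequence method dies: $t=2$ with $w=w_n=w'+1$. The genuine gap is your proposed resolution of that critical case. The paper does not use a syzygy argument there; it treats all of $t=2$ by polarizing $I(P_n)^2$ and running an iterated Betti splitting $J_{i-1}=J_i+K_i$, where the $K_i$ are the polarized squares of the edge generators. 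Because a Betti splitting is an exact additivity of graded Betti numbers (Definition \ref{bettispliting}, Corollary \ref{cor1}), the intersection ideals $K_i\cap J_i=K_iL_i$ --- products with disjoint supports, whose regularity is computed exactly by Lemmas \ref{lem2} and \ref{lem3} --- inject their Betti numbers into those of $J_{i-1}$ with a homological shift and no possibility of cancellation, and maximizing $\mbox{reg}\,(K_i\cap J_i)-1$ over $i$ produces the lower bound $\mbox{reg}\,(I(P_n)^2)\geq R$; Lemma \ref{lem11} only supplies the complementary upper bound on the remainder ideal $J_{n-1}$. This built-in exactness is precisely what your plan is missing.

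Concretely, your candidate syzygy cannot do the job. For $u=x_{n-2}x_{n-1}^{w_{n-1}}\cdot x_{n-1}x_n^{w_n}$ and $v=(x_{n-1}x_n^{w_n})^2$ one has $\mbox{lcm}\,(u,v)=x_{n-2}x_{n-1}^{w_{n-1}+1}x_n^{2w_n}$, of degree $2+w_{n-1}+2w_n$, while in the critical case $R+1=\sum_{i=1}^{n}w_i-n+4+w_n$; the difference is $\sum_{i=1}^{n-2}(w_i-1)\geq n-3$, which is strictly positive for every $n\geq 4$ since $w_i\geq 2$ for $2\leq i\leq n-2$. So your syzygy sits in degree strictly below $R+1$ except when $n=3$ (note also that in the critical case the maximal weight sits at $x_n$, not at $x_{n-1}$ as you say). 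The failure is structural, not just a miscount: for a monomial ideal, $\beta_{1,j}\neq 0$ forces $j$ to be at most the largest degree of an lcm of two generators, and every generator of $I(P_n)^2$ has degree at most $2+2w$, so $\beta_{1,j}(I(P_n)^2)=0$ for all $j>4+4w$; since $R+1$ grows like $\sum_{i=1}^n w_i$, no first syzygy whatsoever can certify $\mbox{reg}\geq R$ once the path is long. The extremal Betti number realizing $R$ lives in high homological degree (compare $\mbox{pd}\,(I(P_n)^t)=n-2$ from Theorem \ref{thm6}), which is exactly why the paper routes the lower bound through the shifted term $\beta_{i-1,j}(J\cap K)$ of a Betti splitting rather than through explicit low-order syzygies. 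To close your gap, replace the syzygy argument for the critical case by the paper's polarization-plus-Betti-splitting treatment of $t=2$.
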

\begin{proof}
It is sufficient to show $\mbox{reg}\,(S/I(P_n)^{t})=\sum\limits_{x\in V(P_n)}w(x)-|E(P_n)|+1+(t-1)(w+1)-1$
for any $t\geq 1$ by Lemma \ref{lem6} (2).
We use  induction  on $n$ and $t$. The case $n=2$ is obvious. Assume that $n\geq 3$. For $t=1$, the statement is true by \cite[Theorem 3.5]{Z3}.
Now we consider  the case $t=2$. Let $J$ be the polarization of $I(P_n)^2$, then
\begin{eqnarray*}
\mathcal{G}(J)&=&\{x_{11}x_{12}\!\!\prod\limits_{j=1}^{2w_2}x_{2j},x_{21}x_{22}\!\!\prod\limits_{j=1}^{2w_3} x_{3j},\!\ldots,\!x_{n-2,1}x_{n-2,2}\!\!\!\!\prod\limits_{j=1}^{2w_{n-1}}\!\!\!x_{n-1,j},x_{n-1,1}x_{n-1,2}\!\!\prod\limits_{j=1}^{2w_n}x_{nj},\\
&&(x_{11}\!\!\prod\limits_{j=1}^{w_2+1}\!\!x_{2j})\prod\limits_{j=1}^{w_3}\!\!x_{3j},(x_{21}\!\!\prod\limits_{j=1}^{w_3+1}\!\!x_{3j})
\prod\limits_{j=1}^{w_4}\!\!x_{4j},
\ldots,(x_{n-2,1}\!\!\!\!\prod\limits_{j=1}^{w_{n-1}+1}\!\!\!\!x_{n-1,j})\!\!\prod\limits_{j=1}^{w_n}x_{nj},\\
&&(x_{11}\prod\limits_{j=1}^{w_2}x_{2j})(x_{31}\prod\limits_{j=1}^{w_{4}}x_{4,j}),\ldots,
(x_{n-3,1}\prod\limits_{j=1}^{w_{n-2}}x_{n-2,j})(x_{n-1,1}\prod\limits_{j=1}^{w_{n}}x_{n,j}),\ldots,\\
& &
(x_{11}\prod\limits_{j=1}^{w_2}x_{2j})(x_{n-2,1}\prod\limits_{j=1}^{w_{n-1}}x_{n-1,j}),(x_{21}\prod\limits_{j=1}^{w_3} x_{3j})(x_{n-1,1}\prod\limits_{j=1}^{w_n} x_{nj}),\\
& &(x_{11}\prod\limits_{j=1}^{w_2}x_{2j})(x_{n-1,1}\prod\limits_{j=1}^{w_n} x_{nj})\}.
 \end{eqnarray*}
 For $1\leq i\leq n-1$, we let $K_i=(x_{n-i,1}x_{n-i,2}\!\!\!\!\!\prod\limits_{j=1}^{2w_{n-i+1}}\!\!\!\!\!x_{n-i+1,j})$,
\begin{eqnarray*}
J_i&=&(x_{11}x_{12}\!\!\prod\limits_{j=1}^{2w_2}x_{2j},x_{21}x_{22}\!\!\prod\limits_{j=1}^{2w_3} x_{3j},\!\ldots,x_{n-i-1,1}x_{n-i-1,2}\!\!\!\!\prod\limits_{j=1}^{2w_{n-i}} \!\!\!\!x_{n-i,j},\\
& &\Widehat{x_{n-i,1}x_{n-i,2}\!\!\!\!\prod\limits_{j=1}^{2w_{n-i+1}} \!\!\!\!x_{n-i+1,j}},\ldots,
\!\Widehat{x_{n-2,1}x_{n-2,2}\!\!\!\!\prod\limits_{j=1}^{2w_{n-1}}\!\!\!x_{n-1,j}},
\Widehat{x_{n-1,1}x_{n-1,2}\!\!\prod\limits_{j=1}^{2w_n}x_{nj}},\\
&&(x_{11}\!\!\prod\limits_{j=1}^{w_2+1}\!\!x_{2j})\prod\limits_{j=1}^{w_3}\!\!x_{3j},(x_{21}\!\!\prod\limits_{j=1}^{w_3+1}\!\!x_{3j})
\prod\limits_{j=1}^{w_4}\!\!x_{4j},
\ldots,(x_{n-2,1}\!\!\!\!\prod\limits_{j=1}^{w_{n-1}+1}\!\!\!\!x_{n-1,j})\!\!\prod\limits_{j=1}^{w_n}x_{nj},\ldots,\\
& &(x_{11}\prod\limits_{j=1}^{w_2}x_{2j})(x_{n-2,1}\prod\limits_{j=1}^{w_{n-1}}x_{n-1,j}),(x_{21}\prod\limits_{j=1}^{w_3} x_{3j})(x_{n-1,1}\prod\limits_{j=1}^{w_n} x_{nj}),\\
& &(x_{11}\prod\limits_{j=1}^{w_2}x_{2j})(x_{n-1,1}\prod\limits_{j=1}^{w_n} x_{nj})),
 \end{eqnarray*}
 where $\Widehat{x_{n-i,1}x_{n-i,2}\!\!\!\!\prod\limits_{j=1}^{2w_{n-i+1}} \!\!\!\!x_{n-i+1,j}}$  denotes the element
 $x_{n-i,1}x_{n-i,2}\!\!\!\!\prod\limits_{j=1}^{2w_{n-i+1}} \!\!\!\!x_{n-i+1,j}$ being
 omitted from  $J_i$.
Then we have
$$J_{i-1}=J_{i}+K_{i}\ \  \text{and}\ \  K_{i}\cap J_{i}=K_{i}L_{i},,\ \  \text{for }\ \ 1\leq i\leq n-1,$$
 where $J_0=J$,
$L_1=(x_{11}\!\!\prod\limits_{j=1}^{w_2}x_{2j},x_{21}\!\!\prod\limits_{j=1}^{w_3}x_{3j},\ldots,x_{n-3,1}
\!\!\!\!\prod\limits_{j=1}^{w_{n-2}}\!\!\!\!x_{n-2,j}, x_{n-2,1}\!\!\!\!\!\!\prod\limits_{j=3}^{w_{n-1}+1}\!\!\!\!x_{n-1,j})$,\\
$L_2=(x_{11}\!\!\prod\limits_{j=1}^{w_2}\!\!x_{2j},x_{21}\!\!\prod\limits_{j=1}^{w_3}\!\!x_{3j},
\ldots,x_{n-4,1}\!\!\prod\limits_{j=1}^{w_{n-3}}\!\!x_{n-3,j}, x_{n-3,1}\!\!\!\!\!\prod\limits_{j=3}^{w_{n-2}+1}
\!\!\!\!\!x_{n-2,j},\!\!\prod\limits_{j=1}^{w_{n}}\!\!x_{nj})
$,\\
$L_i=(x_{11}\prod\limits_{j=1}^{w_2}x_{2j},x_{21}\!\!\prod\limits_{j=1}^{w_3}\!\!x_{3j},\ldots,
x_{n-i-2,1}\!\!\!\!\!\prod\limits_{j=1}^{w_{n-i-1}} \!\!\!\!\!x_{n-i-1,j},x_{n-i-1,1}\!\!\!\!\!\prod\limits_{j=3}^{w_{n-i}+1}\!\!\!\!\!x_{n-i,j},
\prod\limits_{j=1}^{w_{n-i+2}}\!\!x_{n-i+2,j},\\
x_{n-i+2,1}\!\!\prod\limits_{j=1}^{w_{n-i+3}}
\!\!\!x_{n-i+3,j},\ldots,x_{n-1,1}\!\prod\limits_{j=1}^{w_n}\!x_{nj})\ \text{for}\  3\leq i\leq n-1$
and  variables that appear in $K_{i}$ and $L_{i}$ are different  for $1\leq i\leq n-1$.

By Lemma \ref{lem2}, \cite[Theorem 3.5]{Z3} and the proof of \cite[Theorem 4.1]{Z3},  we obtain
\begin{eqnarray*}
\mbox{reg}\,(K_1\cap J_1)&=&\mbox{reg}\,(K_1L_1)=\mbox{reg}\,(K_1)+\mbox{reg}\,(L_1)\\
&=&2w_n+2+\sum\limits_{i=1}^{n-1}w_i-(n-2)+1-1\\
&=&\sum\limits_{i=1}^{n}w_i-(n-1)+1+w_n+2,\\
\mbox{reg}\,(K_2\cap J_2)&=&2w_{n-1}+2+w_n-1+\sum\limits_{i=1}^{n-2}w_i-(n-3)+1-1\\
&=&\sum\limits_{i=1}^{n}w_i-(n-1)+1+w_{n-1}+2,
\end{eqnarray*}
\begin{eqnarray*}
\mbox{reg}\,(K_i\cap J_i)\!\!\!&=&\!\!\!2w_{n-i+1}+2+\!\sum\limits_{i=1}^{n-i}w_i-(n-i-1)
+\!\!\!\sum\limits_{i={n-i+2}}^{n}\!\!\!w_i-(n-(n-i+2))-1\\
&=&\sum\limits_{i=1}^{n}w_i-(n-1)+1+w_{n-i+1}+2,\ \text{for }\  3\leq i\leq n-1.
\end{eqnarray*}
 In brief, for $1\leq i\leq n-1$, we have
\[
\mbox{reg}\,(K_i\cap J_i)=\sum\limits_{i=1}^{n}w_i-(n-1)+1+w_{n-i+1}+2.\eqno(1)
\]
Notice that all $K_i$ have linear resolutions for $1\leq i\leq n-1$, it follows that
$J_{i-1}=J_{i}+K_{i}$ is a Betti splitting and $\mbox{reg}\,(K_i)=2w_{n-i+1}+2$.
By Lemma \ref{lem1}, we obtain, for $1\leq i\leq n-1$,
\[
\mbox{reg}\,(J_{i-1})=\mbox{max}\,\{\mbox{reg}\,(J_{i}),\mbox{reg}\,(K_{i}), \mbox{reg}\,(J_{i}\cap K_{i})-1\}.\eqno(2)
\]
By repeated use of  the above equalities (2) and Lemma \ref{lem4} (2), we can obtain
\begin{eqnarray*}
\mbox{reg}\,(I(D)^2)&=&\mbox{reg}\,(J)=\mbox{max}\,\{\mbox{reg}\,(J_1),\mbox{reg}\,(K_1),\mbox{reg}\,(K_1\cap J_1)-1\}\\
&=&\mbox{max}\,\{\mbox{reg}\,(J_{n-1}),\mbox{reg}\,(K_j),\mbox{reg}\,(K_j\cap J_j)-1,1\leq j\leq n-1\}.\hspace{0.8cm} (3)
\end{eqnarray*}
 Notice that the ideal $I_n$ in Lemma  \ref{lem11} is the polarization $J_{n-1}^{\mathcal {P}}$ of $J_{n-1}$. By
 Lemmas \ref{lem4} (2) and \ref{lem11}, we have
\[
\mbox{reg}\,(J_{n-1})=\mbox{reg}\,(J_{n-1}^{\mathcal {P}})=\mbox{reg}\,(I_n)\leq \sum\limits_{i=1}^{n}w_i-(n-1)+1+(w+1).\eqno(4)
\]
Thus, by  equalities (1), (3) and (4), we obtain
\begin{eqnarray*}
\mbox{reg}\,(I(P_n)^2)&=&\mbox{max}\,\{\mbox{reg}\,(J_{n-1}),\mbox{reg}\,(K_j),\mbox{reg}\,(K_j\cap J_j)-1,1\leq j\leq n-1\}\\
&=&\mbox{max}\,\{\sum\limits_{i=1}^{n}w_i-(n-1)+1+(w+1),2w_{n-i+1}+2,\\
& &\sum\limits_{i=1}^{n}w_i-(n-1)+1+(w_{n-i+1}+1), 1\leq i \leq n-1\}\\
&=&\sum\limits_{i=1}^{n}w_i-(n-1)+1+(w+1),
\end{eqnarray*}
where the second equality holds because of $\mbox{reg}\,(J_{n-1})\leq \sum\limits_{i=1}^{n}w_i-(n-1)+1+(w+1)$
and $w=\mbox{max}\,\{w_i\mid 1\leq i\leq n\}$.

Finally  we assume that $t\geq 3$. Consider the   short exact sequences
 $$0\longrightarrow \frac{S}{(I(P_n)^{t}:x_{n}^{w_n})}(-w_n)\stackrel{ \cdot x_{n}^{w_n}}\longrightarrow \frac{S}{I(P_n)^{t}}\longrightarrow \frac{S}{(I(P_n)^{t},x_{n}^{w_n})}\longrightarrow 0\eqno(5)$$
$$0\longrightarrow \frac{S}{\!(I(P_n)^{t}\!\!:\!x_{n-1}x_{n}^{w_n})}(-1)\stackrel{\cdot x_{n-1}}\longrightarrow \frac{S}{\!(I(P_n)^{t}\!\!:\!x_{n}^{w_n}\!)}\longrightarrow \frac{S}{((I(P_n)^{t}\!\!:\!x_{n}^{w_n}),\!x_{n-1}\!)}\longrightarrow 0.\eqno(6)$$

\vspace{5mm}
\hspace{-4mm}Notice that $(I(P_n)^{t},x_{n}^{w_n})=(I(P_{n-1})^{t},x_{n}^{w_n})$, $(I(P_n)^{t}:x_{n-1}x_n^{w_n})=I(P_n)^{t-1}$ and
$((I(P_n)^{t}:x_n^{w_n}),x_{n-1})=(I(P_{n-2})^{t},x_{n-1})$ by Lemmas \ref{lem8} $\sim$ \ref{lem10}. Thus, by induction hypotheses on $n$ and  $t$, and Lemma \ref{lem2} (1) and Lemma \ref{lem6} (2), we get
\begin{eqnarray*}
\mbox{reg}\,(\frac{S}{(I(P_n)^t,x_{n}^{w_n})})\!\!\!&=&\!\!\!\mbox{reg}\,(\frac{S}{(I(P_{n-1})^{t},x_{n}^{w_n})})
=\mbox{reg}\,(S'/(I(P_{n-1})^{t}))+\mbox{reg}\,(k[x_n]/(x_{n}^{w_n}))\\
&=&\!\!\!\!\sum\limits_{i=1}^{n-1}w_i-|E(P_{n-1})|+1+(t-1)(w'+1)-1+w_n-1\\
&=&\!\!\!\!\sum\limits_{i=1}^{n}w_i-|E(P_n)|+1+(t-1)(w'+1)-1, \hspace{2.9cm} (7)
\end{eqnarray*}
where $w'=\mbox{max}\,\{w_i\mid 1\leq i\leq n-1\}$ and $S'=k[x_1,\ldots,x_{n-1}]$;
\[
\mbox{reg}\,(\frac{S}{(I(P_n)^{t}:x_{n-1}x_n^{w_n})}\!)=\mbox{reg}\,(\frac{S}{I(P_n)^{t-1}})\\
=\sum\limits_{i=1}^{n}w_i-|E(P_n)|+1+(t-2)(w+1)-1,\eqno (8)
\]
and
\begin{eqnarray*}
\mbox{reg}\,(\frac{S}{((I(P_n)^{t}:x_n^{w_n}),x_{n-1})})&=&\mbox{reg}\,(\frac{S}{(I(P_{n-2})^{t},x_{n-1})})
=\mbox{reg}\,(\frac{S''}{I(P_{n-2})^{t}})\\
&=&\!\!\sum\limits_{i=1}^{n-2}w_i-|E(P_{n-2})|+1+(t-1)(w''+1)-1 \hspace{0.7cm} (9)
\end{eqnarray*}
where the second equality holds by Lemma \ref{lem7} (2),  here $w''=\mbox{max}\,\{w_i\mid 1\leq i\leq n-2\}$ and $S''=k[x_1,x_2,\ldots,x_{n-2}]$.
Let
\[
\alpha=\mbox{reg}\,(\frac{S}{(I(P_n)^t,x_{n}^{w_n})}),\ \ \ \beta=\mbox{reg}\,(\frac{S}{(I(P_n)^{t}:x_{n-1}x_n^{w_n})}(-w_n-1))
\]
 and
\[
\gamma=\mbox{reg}\,(\frac{S}{((I(P_n)^{t}:x_n^{w_n}),x_{n-1})}(-w_n)),
\]
 then, by equality (8) and (9), we get
\[\beta-\gamma=(t-1)(w-w'')+w_n-w-2+w_{n-1}.
\]

If $w=w'$, then  $\alpha\geq \mbox{max}\,\{\beta,\gamma\}$ by comparing (7), (8) and (9). Using Lemma \ref{lem5} (3) on the short exact sequence (6), we obtain
\begin{eqnarray*}
\mbox{reg}\,(\frac{S}{(I(P_n)^t,x_{n}^{w_n})})&\geq&\mbox{reg}\,(\frac{S}{(I(D)^{t}:x_{n}^{w_n})}(-w_n)).
\end{eqnarray*}
Again using Lemma \ref{lem5} (3) on the short exact sequence (5), we get
$$\mbox{reg}\,(S/I(P_n)^{t})=\mbox{reg}\,(\frac{S}{(I(P_n)^{t},x_{n}^{w_n})})
=\sum\limits_{i=1}^{n}w_i-|E(P_n)|+1+(t-1)(w+1)-1.$$
If $w>w'$, then $w=w_n$ and $w>w''$. This implies $\beta-\gamma>1$  because of $t\geq 3$.
Using Lemma \ref{lem5} (4) on the  exact sequence (6), we obtain
\begin{eqnarray*}
\mbox{reg}\,(\frac{S}{(I(P_n)^{t}:x_{n}^{w_n})})&=&\mbox{reg}\,(\frac{S}{(I(P_n)^{t}:x_{n-1}x_n^{w_n})}(-1))\\
&=&\sum\limits_{i=1}^{n}w_i-|E(P_n)|+1+(t-2)(w+1).\hspace{2.8cm} (10)
\end{eqnarray*}
By comparing equalities (7) and (10), we get
\begin{eqnarray*}
\mbox{reg}\,(\frac{S}{(I(P_n)^{t}:x_{n}^{w_n})}(-w_n))\!&>\!&\mbox{reg}\,(\frac{S}{(I(P_n)^{t},x_{n}^{w_n})})+1
 \end{eqnarray*}
because of $t\geq 3$.
Therefore, by using Lemma \ref{lem5} (4) on the exact sequence (5), and equality (10), we have
\begin{eqnarray*}
\mbox{reg}\,(S/I(P_n)^{t})&=&\mbox{reg}\,(\frac{S}{(I(P_n)^{t}:x_{n}^{w_n})}(-w_n))\\
&=&\sum\limits_{i=1}^{n}w_i-|E(P_n)|+1+(t-1)(w+1)-1.
\end{eqnarray*}
The proof is completed.
\end{proof}

\medskip
The following example shows that the assumption in Theorem \ref{thm3} that
$w_i\geq 2$  for any $2\leq i\leq n-1$ cannot be dropped.
\begin{Example}  \label{example6}
Let $I(D)=(x_1x_2^{5},x_2x_3,x_3x_4^{8})$ be the edge ideal of vertex-weighted oriented line graph $D=(V(D),E(D),w)$  with $w_1=w_3=1$,
$w_2=5$ and  $w_4=8$. By using CoCoA, we get $\mbox{reg}\,(I(D)^2)=18$.  But  we have
$\mbox{reg}\,(I(D)^{2})=\sum\limits_{i=1}^{4}w_i-|E(P_n)|+1+(w_4+1)=22$
 by Theorem \ref{thm3}.
\end{Example}

The following example shows that  the regularity of the powers of
the edge ideals of vertex-weighted oriented line graphs are related to
direction selection in Theorem \ref{thm3}.
\begin{Example}  \label{example6}
Let $I(D)=(x_1x_2^{5},x_3x_2^{5},x_3x_4^{8},x_5x_4^8,x_5x_6^{2})$ be the edge ideal of a vertex-weighted oriented line graph $D=(V(D),E(D),w)$  with $w_1=w_3=w_5=1$, $w_2=5$, $w_4=8$ and $w_6=2$. By using CoCoA, we obtain  $\mbox{reg}\,(I(D)^2)=17$.
But  we have
$\mbox{reg}\,(I(D)^{2})=\sum\limits_{i=1}^{6}w_i-|E(P_n)|+1+(w_4+1)=23$
 by Theorem \ref{thm3}.
\end{Example}

\section{projective dimension and  regularity of  powers of edge ideals of vertex-weighted rooted forests}

\hspace{3mm}In this section, we will give some formulas for the projective dimension and the regularity of the
powers of the edge ideals of vertex-weighted rooted forests. We shall start from oriented star graph.

\begin{Theorem}\label{thm4}
Let $D=(V(D), E(D),w)$ be a weighted oriented star graph. If its edge set $E(D)$ is one of the following three cases $\{x_1x_2,x_1x_3,\ldots,x_1x_n\}$, $\{x_2x_1,x_3x_1,\\
\ldots,x_nx_1\}$, and $\{x_1x_2,x_2x_3,\ldots,x_2x_n\}$ where $w_2\geq 2$. Then
$$\mbox{reg}\,(I(D)^{t})=\sum\limits_{i=1}^{n}w_i-|E(D)|+1+(t-1)(w+1)\hspace{1cm}  \mbox{for all}\ \ t\geq 1,$$
where $w=\mbox{max}\,\{w_i\mid 1\leq i\leq n\}$.
\end{Theorem}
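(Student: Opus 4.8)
The plan is to reduce the statement, via Lemma \ref{lem6}(2), to proving $\operatorname{reg}(S/I(D)^t)=R_t$, where $R_t:=\sum_{i=1}^{n}w_i-|E(D)|+(t-1)(w+1)$ and $|E(D)|=n-1$, and then to treat the three orientations separately, since they have genuinely different algebraic shapes.

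For the first two edge sets the generators share a common variable, so the problem collapses to a complete intersection and never needs induction. In the case $\{x_1x_2,\dots,x_1x_n\}$ the source $x_1$ forces $w_1=1$ and $I(D)=x_1\,(x_2^{w_2},\dots,x_n^{w_n})$, hence $I(D)^t=x_1^{t}J^t$ with $J=(x_2^{w_2},\dots,x_n^{w_n})$ a regular sequence in disjoint variables; Lemma \ref{lem3}(2) gives $\operatorname{reg}(I(D)^t)=\operatorname{reg}(J^t)+t$, Theorem \ref{thm1} evaluates $\operatorname{reg}(J^t)$ (here $w=\max_{i\ge2}w_i$), and a direct bookkeeping check matches the claimed formula. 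The edge set $\{x_2x_1,\dots,x_nx_1\}$ is the same in spirit: the leaves are now sources, so $w_2=\dots=w_n=1$ and $I(D)=x_1^{w_1}(x_2,\dots,x_n)$, giving $I(D)^t=x_1^{tw_1}J^t$ with $J$ a regular sequence of linear forms; Theorem \ref{thm1} yields $\operatorname{reg}(J^t)=t$, so $\operatorname{reg}(I(D)^t)=t(w_1+1)$, again matching. Both computations are valid for every $t\ge1$.

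The third edge set $\{x_1x_2,x_2x_3,\dots,x_2x_n\}$, a star centered at $x_2$ with a single incoming edge $x_1\to x_2$ and $w_2\ge2$, is the real case, and here I would argue by double induction on $n$ and $t$. The base case $n=3$ is the path $P_3$, covered by Theorem \ref{thm3}, and the base case $t=1$ is the known formula for $\operatorname{reg}(I(D))$ from prior work (\cite{Z4}). For the inductive step I fix a sink leaf $x_k$ ($k\ge3$) and run the two short exact sequences from the proof of Theorem \ref{thm3}: first
$$0\to \frac{S}{(I(D)^t:x_k^{w_k})}(-w_k)\to \frac{S}{I(D)^t}\to \frac{S}{(I(D)^t,x_k^{w_k})}\to 0,$$
and then the analogous sequence for $S/(I(D)^t:x_k^{w_k})$ obtained by colon-ing with $x_2$. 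Lemmas \ref{lem8}--\ref{lem10} identify the three ideals: $(I(D)^t,x_k^{w_k})=(I(D\setminus x_k)^t,x_k^{w_k})$ with $D\setminus x_k$ again of this type, $(I(D)^t:x_2x_k^{w_k})=I(D)^{t-1}$, and, crucially, $((I(D)^t:x_k^{w_k}),x_2)=(x_2)$ because $D\setminus x_2$ has no edges. The last fact makes the cokernel of the second sequence have regularity $0$, so Lemma \ref{lem5}(4) gives $\operatorname{reg}(S/(I(D)^t:x_k^{w_k}))=R_{t-1}+1$; feeding this together with the induction on $n$ and Lemma \ref{lem2} into the first sequence produces the two candidate values $R_t$ (for the cokernel) and $R_{t-1}+1+w_k$ (for the shifted kernel).

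The main obstacle is that a careless choice of $x_k$ can land in the boundary case $\operatorname{reg}(A)=\operatorname{reg}(C)+1$ of Lemma \ref{lem5}, which that lemma cannot resolve; this is exactly the delicate $t=2$, $w=w'+1$ phenomenon that forced the polarization/Betti-splitting detour in Theorems \ref{thm1} and \ref{thm3}. I would sidestep it by choosing $x_k$ to be a sink leaf that does \emph{not} uniquely attain the maximum weight $w$. Such a leaf always exists for $n\ge4$: if the unique maximum sits at the center $x_2$ or is attained at least twice, any sink leaf works, while if it sits at a single sink leaf there is still another sink leaf since there are $n-2\ge2$ of them. With this choice the deleted graph retains maximal weight $w$, so $w'=w$ and the cokernel term has regularity exactly $R_t$, which satisfies $R_t-\big(R_{t-1}+1+w_k\big)=w-w_k\ge0$. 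Lemma \ref{lem5}(3) then forces $\operatorname{reg}(S/I(D)^t)=R_t$, closing the induction without ever invoking polarization.
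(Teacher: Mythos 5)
Your proposal is correct and follows essentially the same route as the paper's own proof: the first two orientations are dispatched via Lemma \ref{lem3}(2) and Theorem \ref{thm1} exactly as in the paper, and for the third orientation you run the same double induction on $n$ and $t$, with the same pair of short exact sequences, the same identifications $(I(D)^t,x_k^{w_k})=(I(D\setminus x_k)^t,x_k^{w_k})$, $(I(D)^t:x_2x_k^{w_k})=I(D)^{t-1}$, $((I(D)^t:x_k^{w_k}),x_2)=(x_2)$ from Lemmas \ref{lem8}--\ref{lem10}, and the same applications of Lemma \ref{lem5}(3),(4). Your device of choosing a sink leaf that does not uniquely attain the maximum weight is precisely the paper's ``without loss of generality $w_n\le w_{n-1}$'' normalization, so the two arguments coincide.
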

\begin{proof}
The  cases $E(D)=\{x_1x_2,x_1x_3,\ldots,x_1x_n\}$ and $E(D)=\{x_2x_1,x_3x_1,\ldots,x_nx_1\}$ can be shown by similar arguments,  we only consider the case $E(D)=\{x_1x_2,x_1x_3,\\
\ldots,x_1x_n\}$, then
$$I(D)^t=(x_1^t)(x_2^{w_2},x_3^{w_3},\ldots,x_n^{w_n})^t.$$
From  Lemmas \ref{lem3} (2), \ref{lem6} (2), \ref{lem2} (1) and Theorem  \ref{thm1}, it follows that
$$\mbox{reg}\,(I(D)^{t})=t+\sum\limits_{i=2}^{n}w_i-(n-2)+(t-1)w=\sum\limits_{i=1}^{n}w_i-|E(D)|+1+(t-1)(w+1).$$

If $E(D)=\{x_1x_2,x_2x_3,\ldots,x_2x_n\}$, then $$I(D)^t=(x_1x_2^{w_2},x_2x_3^{w_3},\ldots,x_2x_n^{w_n})^t.$$

It is  sufficient to show  $\mbox{reg}\,(\frac{S}{I(D)^{t}})=\sum\limits_{i=1}^{n}w_i-|E(D)|+1+(t-1)(w+1)-1$ for any $t\geq 1$ by Lemma \ref{lem6} (2).
We prove this statement by induction on $n$ and $t$. The cases  $n=2$ and $n=3$ follow from Theorem \ref{thm3}.
Assume that $n\geq 4$. The case $t=1$ follows from \cite[Theorem 3.1]{Z3}.
Now assume that $t\geq 2$. Without lose of generality, we may assume
 $w_{n}\leq w_{n-1}$. Consider  the following short exact sequences
 $$0\longrightarrow \frac{S}{(I(D)^{t}:x_{n}^{w_n})}(-w_n)\stackrel{ \cdot x_{n}^{w_n}} \longrightarrow \frac{S}{I(D)^{t}}\longrightarrow \frac{S}{(I(D)^{t},x_{n}^{w_n})}\longrightarrow 0 \eqno (1)$$
and
$$0\longrightarrow \frac{S}{((I(D)^{t}:x_{n}^{w_n}):x_2)}(-1)\stackrel{\cdot x_2} \longrightarrow \frac{S}{(I(D)^{t}:x_{n}^{w_n})}\longrightarrow \frac{S}{((I(D)^{t}:x_{n}^{w_n}),x_2)} \longrightarrow 0.\eqno (2)$$
Note that $(I(D)^{t},x_{n}^{w_n})=(I(D\setminus x_{n})^{t},x_{n}^{w_n})$ by Lemma \ref{lem8}.
Thus, by  Lemma \ref{lem2} (1),  Lemma \ref{lem6} (2) and induction hypothesis on $n$, we have
\begin{eqnarray*}
\mbox{reg}\,(\frac{S}{(I(D)^{t},x_{n}^{w_n})})&=&\mbox{reg}\,(\frac{S}{I(D\setminus x_{n})^{t},x_{n}^{w_n}})
=\mbox{reg}\,(\frac{S'}{I(D\setminus x_{n})^{t}})+\mbox{reg}\,(k[x_n]/(x_{n}^{w_n}))\\
&=&\sum\limits_{i=1}^{n-1}w_i-|E(D\setminus x_n)|+1+(t-1)(w'+1)-1+w_n-1\\
&=&\sum\limits_{i=1}^{n}w_i-|E(D)|+1+(t-1)(w+1)-1.\hspace{2.7cm} (3)
\end{eqnarray*}
where the last equality holds because of $w':=\mbox{max}\{w_i\mid 1\leq i\leq n-1\}=w$ and $S'=k[x_1,x_2,\ldots,x_{n-1}]$.

Since $(I(D)^{t}:x_2x_{n}^{w_n})=I(D)^{t-1}$ by Lemma \ref{lem9}, by using induction hypothesis on $t$ and Lemma  \ref{lem6} (2),  we get
\[
\mbox{reg}\,(\frac{S}{I(D)^{t}:x_2x_{n}^{w_n}})=\mbox{reg}\,(\frac{S}{I(D)^{t-1}})
=\sum\limits_{i=1}^{n}w_i-|E(D)|+1+(t-2)(w+1)-1.\eqno (4)
\]
Notice that $((I(D)^{t}:x_{n}^{w_n}),x_2)=(I(D\setminus x_2)^{t},x_2)=(x_2)$ by Lemma \ref{lem10}, it follows that
\[
\mbox{reg}\,(\frac{S}{((I(D)^{t}:x_{n}^{w_n}),x_2)})=\mbox{reg}\,(S/(x_2))=0.\eqno (5)
\]
By comparing the equalities (4) and (5), we get
\begin{eqnarray*}
\mbox{reg}\,(\frac{S}{(I(D)^{t}:x_2x_{n}^{w_n})}(-1))&>&\mbox{reg}\,(\frac{S}{((I(D)^{t}:x_{n}^{w_n}),x_2)})+1.
\end{eqnarray*}
By the above inequality, the exact sequence (2) and Lemma  \ref{lem5} (4), we have
\begin{eqnarray*}
\mbox{reg}\,(\frac{S}{(I(D)^{t}:x_{n}^{w_n})})&=&\mbox{reg}\,(\frac{S}{((I(D)^{t}:x_2x_{n}^{w_n})}(-1))\\
&=&\sum\limits_{i=1}^{n}w_i-|E(D)|+1+(t-2)(w+1),\hspace{3.0cm} (6)
\end{eqnarray*}
where the last equality holds from (4).
By comparing with the equalities (3) and (6), we have
\begin{eqnarray*}
\mbox{reg}\,(\frac{S}{(I(D)^{t},x_{n}^{w_n})})&\geq&\mbox{reg}\,(\frac{S}{(I(D)^{t}:x_{n}^{w_n})}(-w_n)).
 \end{eqnarray*}
Therefore, by the  exact sequence (1), Lemma  \ref{lem5} (3) and Lemma \ref{lem6} (2), we obtain that
$$\mbox{reg}\,(S/I(D)^{t})=\mbox{reg}\,(\frac{S}{(I(D)^{t},x_{n}^{w_n})})=\sum\limits_{i=1}^{n}w_i-|E(D)|+1+(t-1)(w+1)-1,$$
where the last equality holds from (3).
The proof is completed.
\end{proof}

\medskip
Now, we prove the main results of this section.
\begin{Theorem}\label{thm5}

\end{Theorem}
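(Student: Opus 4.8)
The plan is to pass, via Lemma \ref{lem6}(2), to the equivalent statement $\mbox{reg}\,(S/I(D)^{t})=R$, where I abbreviate $R:=\sum_{x\in V(D)}w(x)-|E(D)|+(t-1)(w+1)$, and to prove it by a double induction on $|V(D)|$ and on $t$. The case $t=1$ is the known regularity of the edge ideal of a rooted forest, while Theorems \ref{thm3} and \ref{thm4} dispose of the cases in which $D$ is a line graph or a star; these also serve as the base of the induction. For the inductive step I would choose a leaf $z$ of \emph{maximal} distance from the root of its component and let $y$ be its parent, so $N_D^{-}(z)=\{y\}$. Because $z$ is deepest, all children $c_1=z,c_2,\dots,c_k$ of $y$ are themselves leaves, and $y$ has degree at least $2$, so $w(y)\geq 2$. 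This choice is what makes the reductions clean: deleting $z$, or deleting $y$ together with all of its (leaf) children, removes only in-edges of vertices that are themselves removed, so no surviving vertex becomes a new source and the induced weights coincide with the original ones. Hence $D\setminus z$ and $D':=D\setminus\{y,c_1,\dots,c_k\}$ are again rooted forests satisfying $w(x)\geq 2$ whenever $d(x)\neq 1$, and the induction hypothesis applies to them.

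First I would write the two short exact sequences
\[0\to \frac{S}{(I(D)^{t}:z^{w_z})}(-w_z)\stackrel{\cdot z^{w_z}}\longrightarrow \frac{S}{I(D)^{t}}\to \frac{S}{(I(D)^{t},z^{w_z})}\to 0\]
and
\[0\to \frac{S}{(I(D)^{t}:yz^{w_z})}(-1)\stackrel{\cdot y}\longrightarrow \frac{S}{(I(D)^{t}:z^{w_z})}\to \frac{S}{((I(D)^{t}:z^{w_z}),y)}\to 0.\]
Lemmas \ref{lem8}, \ref{lem9} and \ref{lem10} give $(I(D)^{t},z^{w_z})=(I(D\setminus z)^{t},z^{w_z})$, $(I(D)^{t}:yz^{w_z})=I(D)^{t-1}$, and $((I(D)^{t}:z^{w_z}),y)=(I(D\setminus y)^{t},y)$. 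Since $z$ is deepest, $D\setminus y$ is $D'$ together with the now–isolated vertices $c_1,\dots,c_k$; after killing $y$ I would use Lemma \ref{lem7} to discard these free variables and Lemma \ref{lem2} to split off the factor $k[z]/(z^{w_z})$ in the first sequence.

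The three regularities then evaluate, by the induction hypotheses, to $\mbox{reg}\,(S/(I(D)^{t},z^{w_z}))=R-(t-1)(w-w')$, where $w'=\max\{w(x)\mid x\in V(D\setminus z)\}$; to $R-(w-w_z)$ for the bottom term after the composite shift $-(w_z+1)$; and to $\mbox{reg}\,(S/(I(D\setminus y)^{t},y))+w_z\leq R$ for the remaining term after the shift $-w_z$. The last inequality is the key computation: the shift contributes $+w_z$, which cancels the summand $-w(z)$ hidden in $\sum_{x\in V(D')}w(x)$, leaving the constant $k+1-w(y)-\sum_{i\geq 2}w(c_i)\leq 0$ (using $w(y)\geq 2$ and $w(c_i)\geq 1$) plus a nonpositive multiple of $(w_{D'}-w)$, where $w_{D'}$ is the maximal weight in $D'$. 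With these values the proof splits exactly as in Theorem \ref{thm3}. If $w'=w$, then the right–hand term of the first sequence already has regularity $R$ and dominates the other two, so parts (2) and (3) of Lemma \ref{lem5} propagate $R$ to $\mbox{reg}\,(S/I(D)^{t})$. If $w'<w$, then $w_z=w$, the bottom term has regularity $R$ while the remaining term drops strictly below $R$, so for $t\geq 3$ two applications of Lemma \ref{lem5}(4) again yield $R$.

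The hard part will be the boundary bookkeeping, just as in Theorem \ref{thm3}: when $t=2$, or when $w$ exceeds $w'$ by exactly one, the strict gap needed for Lemma \ref{lem5}(4) can collapse to an equality, and these residual configurations have to be handled by hand — comparing the relevant regularities directly and invoking part (5) of Lemma \ref{lem5}, or falling back on a direct Betti–splitting and polarization computation in the spirit of Lemma \ref{lem11}. I expect the two genuinely delicate points to be (i) verifying the inequality $\mbox{reg}\,(S/(I(D\setminus y)^{t},y))+w_z\leq R$ uniformly, in every configuration of where the maximal weight $w$ is attained (inside the bottom star at $y$ or in $D'$), and (ii) confirming that the maximal–depth choice of $z$ really prevents any deletion from creating a new source, so that the weight hypothesis and the exact weights survive into the subgraphs. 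Once these are in place, the rest is the same linear arithmetic of the invariants already carried out in Theorems \ref{thm3} and \ref{thm4}.
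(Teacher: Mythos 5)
Your skeleton is the paper's: pass to $S/I(D)^{t}$, induct on $|V(D)|$ and $t$, use the two colon exact sequences and the identifications of Lemmas \ref{lem8}--\ref{lem10}, and close with Lemma \ref{lem5}. But the proposal has a genuine gap, and it sits exactly where you flag it. The missing idea is the \emph{choice} of the leaf $z$. You take a deepest leaf; the paper takes a leaf of \emph{smallest weight}: once $D$ is neither a line graph nor a star, it has at least two sink leaves $x,z$, and one may assume $w_z\le w_x$. This forces $w'=\max\{w(v)\mid v\in V(D\setminus z)\}=w$, so your case $w'<w$ never occurs: the term $S/(I(D\setminus z)^{t},z^{w_z})$ has regularity exactly your target value $R$ for every $t\ge 2$, it dominates both shifted colon terms (the comparison only needs $w\ge w_z$ and $w_y$-bookkeeping), and Lemma \ref{lem5}(2)--(3) close the induction uniformly, with no lower bound on $t$. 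With the deepest-leaf choice, when $z$ happens to be the unique vertex of maximal weight you land in $w'<w$, where your appeal to Lemma \ref{lem5}(4) needs a strict inequality that collapses precisely at $t=2$ with $w=w'+1$. You propose to settle that case ``in the spirit of Lemma \ref{lem11}'', but Lemma \ref{lem11} is a line-graph statement proved by an ad hoc polarization and Betti-splitting computation; its rooted-forest analogue is exactly the hard content your proof would still owe, and neither your sketch nor the paper supplies it. So in your setup the configuration $t=2$, $w=w'+1$ is unproven.

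There is also a concrete false step: ``$y$ has degree at least $2$, so $w(y)\ge 2$'' fails when $y$ is a root, since roots are sources and the convention forces $w_y=1$ (the paper itself works with $w_y=1$ in its case $N_D^{-}(y)=\emptyset$). With your choice of $z$ this happens exactly when the deepest leaf has depth one, i.e. $D$ is a disjoint union of source-centered stars and single edges; your base cases (Theorems \ref{thm3} and \ref{thm4}) cover one line or one star, not such unions. Since your key inequality $\mbox{reg}\,(S/(I(D\setminus y)^{t},y))+w_z\le R$ consumes $w(y)\ge 2$, it can fail by one there. The paper avoids both problems: roots with $w_y=1$ are treated explicitly in its computations (5)--(6), and the case of a connected component consisting of the single edge $\{y,z\}$ gets a separate argument, via the surjection $I(D)^{t-1}(-w_z-1)\oplus I(D\setminus z)^{t}\to I(D)^{t}$ with kernel $yz^{w_z}I(D\setminus z)^{t}$ and Lemma \ref{lem5}(5). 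Your write-up would need both of these repairs, plus the minimal-weight choice of $z$ (or a new lemma replacing Lemma \ref{lem11}), to become a complete proof.
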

\begin{proof}
We prove this statement by induction on $|V(D)|$ and $t$. The case $|V(D)|=2$ follows from Theorem \ref{thm3}. Assume that $|V(D)|\geq 3$. The case $t=1$ follows from \cite[Theorem 3.5]{Z3}.
Now assume that $t\geq 2$. If
 $D$ is an oriented line graph, it follows from  Theorem \ref{thm3}. Otherwise, there are at least two leaves in $D$. We may
suppose both $x$ and $z$ are leaves of $D$ such that  $w_z\leq w_x$ and $N_{D}^{-}(z)=\{y\}$.  If $D$ is an oriented star graph, then this  statement holds by Theorem \ref{thm4}.

If there exists a connected component $D_1$ of $D$, which is a line graph with only one edge $\{y,z\}$.
Then $I(D)=(I(D\setminus z), yz^{w_z})$  and
$$I(D)^{t}=(I(D\setminus z),yz^{w_z})^{t}=I(D\setminus z)^{t}+yz^{w_z}I(D)^{t-1}.$$
Thus there exists a surjection:
$$\phi :\ I(D)^{t-1}(-w_{z}-1)\oplus I(D\setminus z)^{t} \overset{\cdot (yz^{w_z},1)}\longrightarrow  I(D)^{t}.$$
Since $yz^{w_z}$ is a regular  element of $S/I(D\setminus z)$, the kernel of $\phi$ is  $yz^{w_z}I(D\setminus z)^{t}$.
Thus we  have the  short exact sequence
$$0\longrightarrow I(D\setminus z)^{t}(-w_{z}-1)\longrightarrow I(D)^{t-1}(-w_{z}-1)\oplus I(D\setminus z)^{t}\stackrel{\cdot (yz^{w_z},1)}\longrightarrow I(D)^t \longrightarrow 0.$$
By induction hypotheses on $t$ and $|V(D)|$, we obtain that
\begin{eqnarray*}\mbox{reg}\,(I(D)^{t-1}(-w_{z}-1))&=&\mbox{reg}\,(I(D)^{t-1})+w_{z}+1\\
&=&\sum\limits_{x\in V(D)}w(x)-|E(D)|+1+(t-2)(w+1)+w_{z}+1\\
&=&\sum\limits_{x\in V(D)}w(x)-|E(D)|+1+(t-1)(w+1)+w_{z}-w\\
\end{eqnarray*}
and
\begin{eqnarray*}\mbox{reg}\,(I(D\setminus z)^{t})&=&\sum\limits_{x\in V(D\setminus z)}w(x)-|E(D\setminus z)|+1+(t-1)(w'+1)\\
&=&\sum\limits_{x\in V(D)}w(x)-|E(D)|+1+(t-1)(w+1)+1-w_{z}-1
\end{eqnarray*}
 where the second equality holds in formula for $\mbox{reg}\,(I(D\setminus z)^{t})$  because of $w_z\leq w_x$ and $w'=\mbox{max}\,\{w(x)\mid x\in V(D\setminus z)\}$,
\begin{eqnarray*}\mbox{reg}\,(I(D\setminus z)^{t}(-w_{z}-1))&=&\mbox{reg}\,(I(D\setminus z)^{t})+w_{z}+1\\
&=&\!\!\!\!\!\!\sum\limits_{x\in V(D)}\!\!\!w(x)-|E(D)|+1+(t-1)(w+1)-w_{z}+w_{z}+1\\
&=&\!\!\!\!\!\sum\limits_{x\in V(D)}\!\!\!w(x)-|E(D)|+1+(t-1)(w+1)+1.
\end{eqnarray*}
It follows that \[
\mbox{reg}\,(I(D\setminus z)^{t}(-w_{z}-1))>\mbox{max}\,\{\mbox{reg}\,(I(D)^{t-1}(-w_{z}-1)),\mbox{reg}\,(I(D\setminus z)^{t})\}.
\]
Thus the result follows from Lemma \ref{lem5} (5).

Otherwise, we will show $\mbox{reg}\,(\frac{S}{I(D)^{t}})=\!\!\!\sum\limits_{x\in V(D)}\!\!\!w(x)-|E(D)|+1+(t-1)(w+1)-1$ for any $t\geq 1$.
Thus the conclusion follows  from Lemma \ref{lem6} (2).

We consider the following short exact sequences
 $$0\longrightarrow \frac{S}{(I(D)^{t}:z^{w_z})}(-w_z)\stackrel{ \cdot z^{w_z}} \longrightarrow \frac{S}{I(D)^{t}}\longrightarrow \frac{S}{(I(D)^{t},z^{w_z})}\longrightarrow 0    \eqno(1)$$
and
$$0\longrightarrow \frac{S}{((I(D)^{t}:z^{w_z}):y)}(-1)\stackrel{\cdot y} \longrightarrow \frac{S}{(I(D)^{t}:z^{w_z})}\longrightarrow \frac{S}{((I(D)^{t}:z^{w_z}),y)} \longrightarrow 0 \eqno(2)$$
Notice that $(I(D)^{t},z^{w_z})=(I(D\setminus z)^{t},z^{w_z})$, $(I(D)^{t}:yz^{w_z})=I(D)^{t-1}$
and $((I(D)^{t}:z^{w_z}),y)=(I(D\setminus y)^t,y)$ by Lemmas \ref{lem7} $\sim$ \ref{lem9}. Thus   by Lemma \ref{lem2} (1) and  induction hypotheses on $|V(D)|$ and $t$, we obtain that
\begin{eqnarray*}
\mbox{reg}\,(\frac{S}{(I(D)^t,z^{w_z})})&=&\!\!\mbox{reg}\,(\frac{S}{(I(D\setminus z)^{t},z^{w_z})})
=\!\mbox{reg}\,(S'/(I(D\setminus z)^{t}))+\mbox{reg}\,(k[z]/(z^{w_z}))\\
&=&\!\!\sum\limits_{x\in V(D\setminus z)}\!\!w(x)-|E(D\setminus z)|+1+(t-1)(w'+1)-1+w_z-1\\
&=&\sum\limits_{x\in V(D)}w(x)-|E(D)|+1+(t-1)(w+1)-1 \hspace{1.7cm}\  \ (3)
\end{eqnarray*}
where $w'=\mbox{max}\,\{w(x)\mid x\in V(D\setminus z)\}$ and $S'$ is a polynomial ring over a field with the variable set $V(D\setminus z)$,
$$
\mbox{reg}\,(\frac{S}{(I(D)^{t}:yz^{w_z})})=\mbox{reg}\,(\frac{S}{(I(D)^{t-1})})=\!\!\!\!\!
\sum\limits_{x\in V(D)}\!\!\!\!\!w(x)-|E(D)|+1+(t-2)(w+1)-1.\eqno(4)
$$

Let $N_D(y)=N_D^{-}(y)\cup N_D^{+}(y)$, where $N_D^{-}(y)=\{x_1\}$, $N_D^{+}(y)=\{z,x_2,\ldots,x_{\ell},x_{{\ell}+1},\\
\ldots,x_m\}$ such that $\{z,x_2,\ldots,x_{\ell}\}$ being leaves of $D$ and
$\{x_{\ell+1},\ldots,x_m\}$ not being leaves of $D$ or $N_D(y)=N_D^{+}(y)=\{z,x_1,x_2,\ldots,x_{\ell},x_{{\ell}+1},\ldots,x_m\}$ such that $\{z,x_1,\ldots,x_{\ell}\}$ being leaves of $D$ and $\{x_{\ell+1},
\ldots,x_m\}$ not being leaves of $D$. The second case means that $w_y=1$. Thus by Lemma \ref{lem2} (1) and induction hypothesis on $|V(D)|$, we obtain

(1)  If $N_D^{-}(y)=\{x_1\}$, then
\begin{eqnarray*}
\mbox{reg}\,(\frac{S}{((I(D)^{t}:z^{w_z}),y)})
&=&\sum\limits_{x\in V(D\setminus y)}w(x)-|E(D\setminus y)|+1+(t-1)(w''+1)-1\\
&=&\sum\limits_{x\in V(D)}w(x)-|E(D)|+1+(t-1)(w''+1)+(m+1)\\
&-&((w_{\ell+1}-1)+(w_{\ell+2}-1)+\cdots+(w_m-1))\\
&-&(w_y+w_z+w_1+w_2+\cdots+w_{\ell})-1,\hspace{2.5cm}\  \ (5)
\end{eqnarray*}
(2) If $N_D^{-}(y)=\emptyset$, then
\begin{eqnarray*}
\mbox{reg}\,(\frac{S}{((I(D)^{t}:z^{w_z}),y)})&=&\mbox{reg}\,(S/((I(D\setminus y)^t,y))\\
&=&\mbox{reg}\,(S''/(I(D\setminus y)^t))+\mbox{reg}\,(k[y]/(y))\\
&=&\sum\limits_{x\in V(D\setminus y)}w(x)-|E(D\setminus y)|+1+(t-1)(w''+1)-1\\
&=&\sum\limits_{x\in V(D)}w(x)-|E(D)|+1+(t-1)(w''+1)+(m+1)\\
&-&((w_{\ell+1}-1)+(w_{\ell+1}-1)+\cdots+(w_m-1))\\
&-&(w_y+w_z+w_2+\cdots+w_{\ell})-1,\hspace{3.5cm}\  \ (6)
\end{eqnarray*}
 where the last equality holds because $\{x_{\ell+1},\ldots,x_m\}$ being roots of $D\setminus y$, here $w''=\mbox{max}\,\{w(x)\mid x\in V(D\setminus y)\}$ and $S''$ is a polynomial ring over a field with the variable set $V(D\setminus y)$.

 Using Lemma \ref{lem5} (2) on the short exact sequence (2), we have
 \[
 \mbox{reg}\,(\frac{S}{(I(D)^{t}:z^{w_z})})\leq \mbox{max}\{\mbox{reg}\,(\frac{S}{((I(D)^{t}:z^{w_z}):y)}(-1)),\mbox{reg}\,(\frac{S}{((I(D)^{t}:z^{w_z}),y)})\}.
 \]

 By comparing the equalities (3), (4) (5) and (6), we get
\[
\mbox{reg}\,(\frac{S}{(I(D)^t,z^{w_z})})\geq \mbox{reg}\,(\frac{S}{(I(D)^{t}\!:yz^{w_z})}(-w_z-1)).
\]

Again using Lemma \ref{lem5} (3) on
the short exact sequence (1),  we obtain that
\[
\mbox{reg}\,(\frac{S}{I(D)^{t}})=
\mbox{reg}\,(\frac{S}{(I(D)^t,z^{w_z})})=\!\!\!\!\!\sum\limits_{x\in V(D)}\!\!\!\!\!w(x)-|E(D)|+1+(t-1)(w+1)-1.
\]
The proof is completed.
\end{proof}

\medskip
 As a consequence of Theorem \ref{thm5}, we have
\begin{Corollary}\label{cor2}
Let $D=(V(D),E(D),w)$ be a vertex-weighted rooted forest as in Theorem \ref{thm5}. Then
$$\mbox{reg}\,(I(D)^{t})=\mbox{reg}\,(I(D))+(t-1)(w+1)\hspace{1cm}  \mbox{for all}\ \ t\geq 1,$$
where $w=\mbox{max}\,\{w(x)\mid x \in V(D)\}$.
\end{Corollary}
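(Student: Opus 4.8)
The plan is to obtain this identity as an immediate specialization of Theorem \ref{thm5}. First I would read off the value of $\mbox{reg}\,(I(D))$ by setting $t=1$ in the formula of Theorem \ref{thm5}: the summand $(t-1)(w+1)$ then vanishes, so
$$\mbox{reg}\,(I(D))=\sum\limits_{x\in V(D)}w(x)-|E(D)|+1.$$
Since $D$ satisfies the hypothesis that $w(x)\geq 2$ whenever $d(x)\neq 1$, Theorem \ref{thm5} applies verbatim and this evaluation is legitimate for every such forest.

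Next I would substitute this expression back into the general formula of Theorem \ref{thm5}. Writing the right-hand side of that theorem as its constant part plus the $t$-dependent part,
$$\mbox{reg}\,(I(D)^{t})=\Big(\sum\limits_{x\in V(D)}w(x)-|E(D)|+1\Big)+(t-1)(w+1),$$
and replacing the parenthesized constant by $\mbox{reg}\,(I(D))$ yields exactly
$$\mbox{reg}\,(I(D)^{t})=\mbox{reg}\,(I(D))+(t-1)(w+1)\qquad\mbox{for all}\ t\geq 1.$$

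There is no genuine obstacle here: the corollary merely repackages Theorem \ref{thm5} by recognizing its constant term as $\mbox{reg}\,(I(D))$, thereby exhibiting $\mbox{reg}\,(I(D)^{t})$ as a linear function of $t$ with slope $w+1$ and intercept $\mbox{reg}\,(I(D))$. The only point worth checking is the $t=1$ base case, which is immediate, together with the fact that the slope $w+1$ is independent of $t$, which is visible directly from the formula. This reformulation also makes transparent the asymptotic-linearity phenomenon for $\mbox{reg}\,(I^{t})$ recalled in the introduction, with the stable linear form already attained at $t=1$ rather than only for $t\gg 0$.
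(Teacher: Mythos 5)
Your proof is correct and matches the paper's: the corollary is just Theorem \ref{thm5} with its constant term recognized as $\mbox{reg}\,(I(D))$. The only cosmetic difference is that the paper quotes the formula $\mbox{reg}\,(I(D))=\sum_{x\in V(D)}w(x)-|E(D)|+1$ from \cite[Theorem 3.5]{Z3}, whereas you obtain it by specializing Theorem \ref{thm5} at $t=1$ --- which is the same fact, since that is exactly how the $t=1$ case of Theorem \ref{thm5} is established.
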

\begin{proof}This is a direct consequence of the above theorem and \cite[Theorem 3.5]{Z3}.
\end{proof}

\medskip
\begin{Theorem}\label{thm6}
Let $D=(V(D),E(D),w)$ be a vertex-weighted rooted forest such that   $w(x)\geq 2$ if $d(x)\neq 1$. Then
$$\mbox{pd}\,(I(D)^{t})=|E(D)|-1 \ \  \mbox{for all}\ \ t\geq 1.$$
\end{Theorem}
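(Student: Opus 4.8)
The plan is to prove Theorem~\ref{thm6} by induction on $|V(D)|$ and $t$, mirroring the structure of the proof of Theorem~\ref{thm5} but tracking the projective dimension instead of the regularity. For the base cases, when $t=1$ the formula $\mbox{pd}\,(I(D))=|E(D)|-1$ should follow from the known result \cite[Theorem 3.5]{Z3} (or its companion for projective dimension), and small vertex counts reduce to the oriented line graph and star graph situations. For the oriented line graph $P_n$, I would first establish the companion statement $\mbox{pd}\,(I(P_n)^t)=|E(P_n)|-1$ by the same Betti-splitting and polarization machinery used in Theorem~\ref{thm3}, invoking Corollary~\ref{cor1}(2) in place of Corollary~\ref{cor1}(1). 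For the oriented star graph I would argue analogously to Theorem~\ref{thm4}, using that $I(D)^t=(x_1^t)(x_2^{w_2},\ldots,x_n^{w_n})^t$ in the outward-oriented case so that Lemma~\ref{lem3} reduces the projective dimension to that of a power of a complete-intersection-type ideal.

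For the inductive step I would pick two leaves $x,z$ with $w_z\le w_x$ and $N_D^{-}(z)=\{y\}$, exactly as in Theorem~\ref{thm5}, and split into the same two cases. In the case where $\{y,z\}$ is an isolated edge forming a connected component, I would use the short exact sequence
$$0\longrightarrow I(D\setminus z)^{t}(-w_{z}-1)\longrightarrow I(D)^{t-1}(-w_{z}-1)\oplus I(D\setminus z)^{t}\longrightarrow I(D)^t \longrightarrow 0,$$
together with Lemma~\ref{lem5}(1) and Lemma~\ref{lem2}(2); here one computes $\mbox{pd}\,(I(D\setminus z)^t)=|E(D\setminus z)|-1=|E(D)|-2$ and $\mbox{pd}\,(I(D)^{t-1})=|E(D)|-1$ by induction, and the direct-sum term dominates so that $\mbox{pd}\,(I(D)^t)=|E(D)|-1$. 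In the general case I would use the two short exact sequences~(1) and~(2) from the proof of Theorem~\ref{thm5}, apply Lemma~\ref{lem8}$\sim$Lemma~\ref{lem10} to identify the colon and sum ideals as $(I(D\setminus z)^t,z^{w_z})$, $I(D)^{t-1}$, and $(I(D\setminus y)^t,y)$, and then compute each projective dimension by induction together with Lemma~\ref{lem7}(1) and Lemma~\ref{lem2}(2), chasing the dominating term through Lemma~\ref{lem5}(1).

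The main obstacle I anticipate is the bookkeeping in the general inductive case: Lemma~\ref{lem5}(1) only gives $\mbox{pd}\,(B)=\mbox{pd}\,(A)$ under the hypothesis $\mbox{pd}\,(A)\ge\mbox{pd}\,(C)$, so I must verify the correct inequality between the projective dimensions of the three pieces attached to each short exact sequence, and this requires carefully counting edges in $D\setminus z$ and $D\setminus y$. Deleting the vertex $y$ removes all $m$ edges incident to $y$ (and isolates the leaves hanging off $y$), so $|E(D\setminus y)|$ drops by more than one, and I would need the extra edges contributed by the vertices $x_{\ell+1},\ldots,x_m$ becoming roots to balance the count and confirm that the term forcing $\mbox{pd}\,(I(D)^t)=|E(D)|-1$ genuinely dominates. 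A secondary subtlety is confirming that the polarization argument for $P_n$ transfers verbatim to projective dimension via Lemma~\ref{lem4}(3), but since all the Betti-splitting and polarization lemmas are stated for both invariants, this should be routine once the regularity version in Theorem~\ref{thm3} is in hand. Alternatively, once the regularity formula of Theorem~\ref{thm5} is available, one may shortcut the whole argument by the Auslander--Buchsbaum formula together with the fact that $\mbox{depth}\,(S/I(D)^t)$ stabilizes, reducing the problem to identifying the stable depth; I would keep this as a cross-check on the inductive computation.
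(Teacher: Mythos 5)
Your core inductive step is exactly the paper's proof: the paper proves Theorem~\ref{thm6} by induction on $|E(D)|$ and $t$ (the $t=1$ case being \cite[Theorem 3.3]{Z3}, the projective-dimension companion of the regularity result you cite), using precisely the two short exact sequences you describe, the identifications $(I(D)^{t},z^{w_z})=(I(D\setminus z)^{t},z^{w_z})$, $(I(D)^{t}:yz^{w_z})=I(D)^{t-1}$ and $((I(D)^{t}:z^{w_z}),y)=(I(D\setminus y)^{t},y)$ from Lemmas \ref{lem8}$\sim$\ref{lem10}, then Lemma~\ref{lem2}(2) and the induction hypotheses to get the three values $|E(D)|$, $|E(D)|$ and $|E(D)|-m$ (where $N_D(y)=\{z,x_1,\ldots,x_m\}$), and finally Lemma~\ref{lem5}(1) applied to both sequences. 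The main divergence is that everything else in your plan is unnecessary: since the answer involves only $|E(D)|$ and not the weights, the dominance hypothesis $\mbox{pd}\,(A)\geq \mbox{pd}\,(C)$ of Lemma~\ref{lem5}(1) holds uniformly ($|E(D)|\geq |E(D)|$ and $|E(D)|\geq |E(D)|-m$), so the paper needs no case analysis at all --- no separate treatment of oriented line graphs (hence no Betti splitting, no polarization, no pd-analogue of Lemma~\ref{lem11}), no star graphs, and no isolated-edge-component case; the delicate weight bookkeeping that forced those cases in Theorem~\ref{thm5} is a regularity phenomenon that simply disappears for projective dimension. Two wrinkles in the extra work you propose: in the isolated-edge case, Lemma~\ref{lem5}(1) determines $\mbox{pd}\,(B)$ from $\mbox{pd}\,(A)$, not $\mbox{pd}\,(C)$, and $I(D)^t$ sits in the $C$-slot of your sequence, so you would additionally need the standard bound $\mbox{pd}\,(C)\leq \mbox{max}\,\{\mbox{pd}\,(B),\mbox{pd}\,(A)+1\}$ together with the contrapositive of Lemma~\ref{lem5}(1) to pin it down; and your Auslander--Buchsbaum ``cross-check'' is circular, since knowing $\mbox{reg}\,(I(D)^t)$ gives no information about $\mbox{depth}\,(S/I(D)^t)$, and identifying the stable depth is exactly the content of the theorem (indeed the paper derives the depth statement, Corollary~\ref{cor4}, from Theorem~\ref{thm6}, not the other way around).
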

\begin{proof}
 We prove this statement by induction  on $|E(D)|$ and $t$. The case $|E(D)|=1$ is clear. Assume that $|E(D)|\geq 2$. The case $t=1$ follows from
 \cite[Theorem 3.3]{Z3}.
Now assume that $t\geq 2$. We may suppose $z$ is a leaf of $D$ and $N_D^{-}(z)=\{y\}$.
we will show $\mbox{pd}\,(\frac{S}{I(D)^{t}})=|E(D)|$ for any $t\geq 1$.
Thus the conclusion follows  from Lemma \ref{lem6} (1).

Consider the following short exact sequences
 $$0\longrightarrow \frac{S}{(I(D)^{t}:z^{w_z})}(-w_{z})\stackrel{ \cdot z^{w_z}} \longrightarrow \frac{S}{I(D)^{t}}\longrightarrow \frac{S}{(I(D)^{t},z^{w_z})}\longrightarrow 0    \eqno(1)$$
and
$$0\longrightarrow \frac{S}{((I(D)^{t}:z^{w_z}):y)}(-1)\stackrel{\cdot y} \longrightarrow \frac{S}{(I(D)^{t}:z^{w_z})}\longrightarrow \frac{S}{((I(D)^{t}:z^{w_z}),y)} \longrightarrow 0 \eqno(2)$$
Notice that $(I(D)^{t},z^{w_z})=(I(D\setminus z)^{t},z^{w_z})$, $(I(D)^{t}:yz^{w_z})=I(D)^{t-1}$
and $((I(D)^{t}:z^{w_z}),y)=(I(D\setminus y)^t,y)$ by Lemmas \ref{lem8} $\sim$ \ref{lem10}. We set $N_D(y)=\{z,x_1,x_2,\ldots,x_m\}$, this implies $|E(D\setminus y)|=|E(D)|-(m+1)$. Thus by Lemma \ref{lem2} (2) and induction hypotheses on $|E(D)|$ and $t$, we obtain
\begin{eqnarray*}
\mbox{pd}\,(\frac{S}{(I(D)^{t},z^{w_z})})&=&\mbox{pd}\,(\frac{S}{(I(D\setminus z)^{t},z^{w_z})})
=\mbox{pd}\,(\frac{S'}{(I(D\setminus z)^{t})})+\mbox{pd}\,(\frac{k[z]}{(z^{w_z})})\\
&=&|E(D\setminus z)|+1=|E(D)|  \hspace{6.0cm} (3)
\end{eqnarray*}
where $S'$ is the polynomial ring with  variable set $V(D\setminus z)$,
\[
\mbox{pd}\,(\frac{S}{(I(D)^{t}:yz^{w_z})})=\mbox{pd}\,(\frac{S}{I(D)^{t-1}})=|E(D)| \eqno (4)
\]
and
\begin{eqnarray*}
\mbox{pd}\,(\frac{S}{(I(D)^{t}:z^{w_z}),y)})&=&\mbox{pd}\,(\frac{S}{(I(D\setminus y)^t,y)})=\mbox{pd}\,(\frac{S''}{I(D\setminus y)^t})+1\\
&=&|E(D\setminus y)|+1=|E(D)|-m.\hspace{4.2cm} (5)
\end{eqnarray*}
where $S''$ is the polynomial ring with  variable set $V(D\setminus y)$.

Using Lemma \ref{lem5} (1) on
the short exact sequence (1), (2) and equalities (3), (4) and  (5), we get
$$\mbox{pd}\,(\frac{S}{I(D)^{t}})=\mbox{pd}\,(\frac{S}{(I(D)^{t},z^{w_z})})=|E(D)|.$$
\end{proof}

An immediate consequence of the above theorem is the following corollary.
\begin{Corollary} \label{cor4}
Let $D=(V(D),E(D),w)$ be a vertex-weighted rooted forest as in Theorem \ref{thm6}. Then  $\mbox{depth}\,(I(D))=n-|E(D)|+1$.
\end{Corollary}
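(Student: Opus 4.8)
This is an immediate corollary, so the plan is simply to specialize Theorem \ref{thm6} to $t=1$ and combine it with the Auslander--Buchsbaum formula. First I would read off from Theorem \ref{thm6}, taking $t=1$, that $\mbox{pd}\,(I(D))=|E(D)|-1$; since the hypothesis that $w(x)\ge 2$ whenever $d(x)\neq 1$ is exactly the one assumed in Theorem \ref{thm6}, no extra verification of the weight condition is needed here, and this is precisely why the corollary is phrased ``as in Theorem \ref{thm6}.''

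Next I would regard $I(D)$ as a finitely generated graded $S$-module and apply the graded Auslander--Buchsbaum formula (see \cite{BH}), which gives
\[
\mbox{pd}_S\,(I(D))+\mbox{depth}\,(I(D))=\mbox{depth}\,(S)=n,
\]
the equality $\mbox{depth}\,(S)=n$ holding for the polynomial ring $S=k[x_1,\dots,x_n]$ computed with respect to its graded maximal ideal $(x_1,\dots,x_n)$. Substituting $\mbox{pd}\,(I(D))=|E(D)|-1$ then yields at once
\[
\mbox{depth}\,(I(D))=n-\bigl(|E(D)|-1\bigr)=n-|E(D)|+1,
\]
which is the asserted formula.

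I expect essentially no conceptual obstacle here, since the argument is a one-line consequence of Theorem \ref{thm6} together with a standard homological identity; the work has already been done in establishing the projective dimension. The only point that warrants a moment's care is bookkeeping: the statement concerns the depth of the ideal $I(D)$ itself as an $S$-module rather than that of the quotient $S/I(D)$. These are related through the short exact sequence $0\to I(D)\to S\to S/I(D)\to 0$, but applying Auslander--Buchsbaum directly to the module $I(D)$ sidesteps the quotient entirely and delivers the stated identity immediately.
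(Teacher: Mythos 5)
Your proposal is correct and matches the paper's own proof: both take $t=1$ in Theorem \ref{thm6} to get $\mbox{pd}\,(I(D))=|E(D)|-1$ and then apply the Auslander--Buchsbaum formula directly to the module $I(D)$ to conclude $\mbox{depth}\,(I(D))=n-|E(D)|+1$. Your extra remark about working with $I(D)$ itself rather than $S/I(D)$ is exactly the bookkeeping the paper's one-line proof implicitly relies on.
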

\begin{proof} By Auslander-Buchsbaum formula (see Theorem 1.3.3 of \cite{BH}), it follows that
$$\mbox{depth}\,(I(D))=n-\mbox{pd}\,(I(D))=n-|E(D)|+1.$$
\end{proof}

\medskip
The following  example shows that the assumption in Theorems \ref{thm5} and \ref{thm6}
that $D$ is a vertex-weighted rooted forest such that $w(x)\geq 2$ if $d(x)\neq 1$  cannot be dropped.

\begin{Example}  \label{example6}
Let $I(D)=(x_1x_2^{2},x_2x_3,x_3x_4^{2},x_5x_6^{2},x_6x_7,x_7x_8^{2})$ be the edge ideal of weighted oriented  forest $D=(V(D),E(D),w)$  with $w_1=w_3=w_5=w_7=1$ and  $w_2=w_4=w_6=w_8=2$. By using CoCoA, we obtain $\mbox{pd}\,(I(D)^2)=4$ and $\mbox{reg}\,(I(D)^2)=8$. But  we have
$\mbox{pd}\,(I(D)^2)=|E(D)|-1=5$ by Theorem \ref{thm6}
and $\mbox{reg}\,(I(D)^{2})=\sum\limits_{i=1}^{8}w_i-|E(D)|+1+(w_2+1)=10$
 by Theorem \ref{thm5}.
 \end{Example}

\medskip
 The following example shows that  the projective dimension of  the powers of
the edge ideals of vertex-weighted oriented forest are related to
direction selection in Theorem \ref{thm6}.
\begin{Example}  \label{example6}
Let $I(D)=(x_1x_2^{5},x_3x_2^{5},x_3x_4^{8},x_5x_4^8,x_5x_6^{2})$ be the edge ideal of  vertex-weighted oriented line graph $D=(V(D),E(D),w)$  with $w_1=w_3=w_5=1$, $w_2=5$, $w_4=8$ and $w_6=2$. By using CoCoA, we obtain  $\mbox{pd}\,(I(D)^2)=3$. But  we get
$\mbox{pd}\,(I(D)^2)=|E(D)|-1=4$ by Theorem \ref{thm6}.
\end{Example}

\medskip

\hspace{-6mm} {\bf Acknowledgments}

 \vspace{3mm}
\hspace{-6mm}  This research is supported by the National Natural Science Foundation of China (No.11271275) and  by foundation of the Priority Academic Program Development of Jiangsu Higher Education Institutions.

\end{document}